\documentclass[a4paper]{article}

\usepackage{amsmath,amssymb,amsthm,hyperref,cleveref}
\usepackage{subcaption,tikz}
\usepackage{fullpage,cite}

\usetikzlibrary{arrows, fit,  calc, decorations.markings, decorations.pathmorphing, shapes}

\newtheorem{theorem}{Theorem}[section]
\newtheorem{lemma}[theorem]{Lemma}
\newtheorem{corollary}[theorem]{Corollary}
\theoremstyle{definition}
\newtheorem{definition}[theorem]{Definition}
\newtheorem{conj}[theorem]{Conjecture}
\newtheorem{question}[theorem]{Question}

\providecommand{\sqbinom}[2]{\genfrac{[}{]}{0pt}{}{#1}{#2}}

\providecommand{\meet}{\land}
\providecommand{\join}{\lor}
\providecommand{\Span}[1]{\langle #1 \rangle}
\providecommand{\covered}{\lessdot}

\providecommand{\lattice}{L}
\providecommand{\ind}[1]{\mathbf{1}_{\{#1\}}}
\providecommand{\charf}[1]{\ensuremath{\chi_{#1}}}
\providecommand{\Charf}[1]{X(#1)}
\providecommand{\FF}{\mathbb{F}}
\DeclareMathOperator{\Str}{Str}

\DeclareMathOperator{\VC}{VC}

\title{A Sauer--Shelah--Perles Lemma for Lattices}
\author{
Stijn Cambie
	\thanks{Department of Mathematics, Radboud University Nijmegen, Postbus 9010, 6500 GL Nijmegen, The Netherlands. Email: \texttt{S.Cambie@math.ru.nl}. Research supported by a Vidi Grant of the Netherlands Organization for Scientific Research (NWO), grant number $639.032.614$.} \and
Bogdan Chornomaz\thanks{Department of Mathematics, Vanderbilt University.
Email: \texttt{bogdan.chornomaz@vanderbilt.edu.}} \and
Zeev Dvir\thanks{Department of Computer Science and Department of Mathematics,
Princeton University.
Email: \texttt{zeev.dvir@gmail.com}. Research supported by NSF CAREER award DMS-1451191 and NSF grant CCF-1523816. Work on this paper was done while the author was a Von-Neumann fellow at the Institute for Advanced Study, supported by a Simons foundation fellowship.} \and Yuval Filmus\thanks{Computer Science Department, Technion --- Israel Institute of Technology.
Email: \texttt{yuvalfi@cs.technion.ac.il}. Taub Fellow --- supported by the Taub Foundations. The research was funded by ISF grant 1337/16.}
\and Shay Moran\thanks{
School of Mathematics, IAS. Email: \texttt{shaymoran1@gmail.com}.
Research supported by the National Science Foundation under agreement No.\ CCF-1412958 and by the Simons Foundations.}}

\begin{document}
\maketitle

\begin{abstract}

We study lattice-theoretical extensions of the celebrated Sauer--Shelah--Perles Lemma. 
We conjecture that a general Sauer--Shelah--Perles Lemma holds for a lattice $L$ if and only if $L$ is relatively complemented, and prove partial results towards this conjecture.




\end{abstract}

\section{Introduction} \label{sec:introduction}

Vapnik--Chervonenkis dimension~\cite{VC68,VC71}, or VC dimension for short, is a combinatorial parameter of major importance in discrete and computational geometry~\cite{HW87,CW89,KPW92}, statistical learning theory~\cite{VC71,BEHW89}, and other areas~\cite{FP94,KNR99,AMY17,HQ18}. The VC dimension of a family $F$ of binary vectors, $F \subseteq \{0,1\}^n$, is the largest cardinality of a set shattered by the family. In many of its applications, the power of this notion boils down to the Sauer--Shelah--Perles lemma~\cite{VC71,Sauer72,Shelah72}, which states that the largest cardinality of a family on $n$ points with VC dimension $d$ is 
\begin{equation}\label{eq:1}
\binom{n}{0} + \cdots + \binom{n}{d},
\end{equation}
a bound achieved by the family $\{ S : |S| \leq d \}$. This lemma follows easily from the strengthening due to Pajor~\cite{Pajor85} and Aharoni and Holzman~\cite{AharoniHolzman}, which states that every family of binary vectors shatters at least as many sets as it has:
\begin{equation}\label{eq:2}
\lvert F\rvert \leq \bigl\lvert\{X : X \text{ is shattered by } F\}\bigr\rvert
\end{equation}
 This latter version will be of our primary concern and, for the sake of compliance with existing terminology, we will call it the SSP lemma. 

VC dimension and corresponding lemmas have been extended to various settings, such as 
non-binary vectors~\cite{Steele78,KM78,Alon83,HL95}, integer vectors~\cite{Vershynin05}, Boolean matrices with forbidden configurations~\cite{Anstee85,AF10}, multivalued functions~\cite{HL95}, continuous spaces~\cite{Natarajan89}, graph powers~\cite{CBH98}, and ordered variants~\cite{ARS02}. 
In this paper, we study a generalization of the VC dimension to ranked lattices, 
and identify a rich class of lattices which satisfies the SSP lemma,  including the lattice of subspaces of a finite vector space as well as all geometric lattices (flats of matroids).
Moreover, we identify a necessary condition for a lattice to satisfy the SSP lemma:
it must be free of an induced copy of the lattice $0 <  1 <  2$.
We conjecture that this is the only obstruction; namely that a lattice satisfies the lemma if and only if it is \emph{relatively complemented (RC)}, that is, does not contain an induced copy of $0 < 1 < 2$.


\paragraph{VC dimension for ranked lattices.} The definition of shattering for binary vectors is easier to formulate in terms of sets, where we identify an $n$-bit string with its set of $1$'s and vice versa. 
A family $F$ of subsets of $\{1,\ldots,n\}$ \emph{shatters} a set $S$ if for all $T \subseteq S$, the family $F$ contains a set $A$ with $A \cap S = T$.
This definition readily generalizes to lattices (indeed, to meet-semilattices): a family $F$ of elements in a lattice $L$ \emph{shatters} an element $s \in L$ if for all $t \leq s$, the family $F$ contains an element~$a$ such that $a \meet s = t$.
If the lattice is ranked, it is natural to define the VC~dimension of a family as the maximum rank of an element it shatters. 

Given a ranked lattice $\lattice$, the family $F_d$ consisting of all elements of rank at most $d$, has VC~dimension~$d$. This family contains $\sqbinom{\lattice}{0} + \cdots + \sqbinom{\lattice}{d}$ elements, where $\sqbinom{\lattice}{d}$ is the number of elements in $\lattice$ of rank~$d$. The classical Sauer--Shelah--Perles lemma states that when $\lattice$ is the lattice of all subsets of $\{1,\ldots,n\}$, the family $F_d$ has maximum size among all families of VC~dimension $d$.

The following generalization of \Cref{eq:1} appears in an unpublished (but well circulated) manuscript of Babai and Frankl~\cite{BF92}.

\begin{theorem}[Babai and Frankl \cite{BF92}] \label{thm:main-intro}
Let $L$ be a ranked lattice of rank $r$ with nonvanishing M\"obius function, i.e.\ $\mu(x,y) \neq 0$ for all $x \leq y.$
Then for all $0 \leq d \leq r$, any family $F \subseteq L$ of VC~dimension $d$
contains at most $\sqbinom{L}{\leq \VC(F)}=\sqbinom{L}{0} + \cdots + \sqbinom{L}{d}$ elements.
Furthermore, for every $d \leq r(L)$ the inequality is tight for some $F \subseteq L$ of VC~dimension $d$.
\end{theorem}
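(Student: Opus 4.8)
The plan is to run the linear-algebra (a.k.a.\ polynomial) method, with the M\"obius function playing the role that inclusion--exclusion plays in the Boolean case. I work over $\mathbb{Q}$ and, for each $s\in L$, introduce the \emph{zeta vector} $z_s\in\mathbb{Q}^F$ given by $z_s(a)=\ind{s\le a}$ for $a\in F$; these are graded by $\rank(s)$ and are the lattice analogue of the monomials $\prod_{i\in S}x_i$. First I would check that they span the whole of $\mathbb{Q}^F$: M\"obius inversion over the principal filter above $a$ gives
\begin{equation*}
\delta_a=\sum_{s\ge a}\mu(a,s)\,z_s \qquad (a\in F),
\end{equation*}
so the point-indicators $\delta_a$, which form the standard basis of $\mathbb{Q}^F$, already lie in $\operatorname{span}\{z_s:s\in L\}$.

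The heart of the argument is a reduction lemma: \emph{if $F$ does not shatter $s$, then $z_s$ lies in the span of $\{z_u:u<s\}$.} To prove it, fix a witness $t_0\le s$ for which no $a\in F$ satisfies $a\meet s=t_0$. A short M\"obius computation on the interval $[t_0,s]$ yields, for every $a\in F$,
\begin{equation*}
\ind{a\meet s=t_0}=\sum_{t_0\le u\le s}\mu(t_0,u)\,z_u(a),
\end{equation*}
where one uses $a\meet s\le s$ so that $\ind{u\le a\meet s}$ collapses to $\ind{u\le a}=z_u(a)$ whenever $u\le s$. The left-hand side vanishes identically on $F$, so the right-hand side is a linear dependence among $\{z_u:t_0\le u\le s\}$ in which the coefficient of the top term $z_s$ is exactly $\mu(t_0,s)$. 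By hypothesis this coefficient is \emph{nonzero}, so dividing by it expresses $z_s$ through strictly lower zeta vectors. This is the one and only place where nonvanishing of the M\"obius function enters, and establishing this identity and identifying the top coefficient as $\mu(t_0,s)$ is the step I expect to be the main obstacle; everything else is bookkeeping.

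Iterating the reduction downward through the rank function (well-founded since $u<s$ forces $\rank(u)<\rank(s)$) eliminates every unshattered $z_s$, so the shattered zeta vectors still span $\mathbb{Q}^F$, giving
\begin{equation*}
\lvert F\rvert=\dim\mathbb{Q}^F\le\bigl\lvert\{s\in L: F\text{ shatters }s\}\bigr\rvert.
\end{equation*}
Since $\VC(F)=d$, every shattered element has rank at most $d$, so the shattered elements are among the $\sqbinom{L}{0}+\cdots+\sqbinom{L}{d}$ elements of rank $\le d$; this gives the claimed bound, and en route the stronger (lattice) shattering inequality~\eqref{eq:2}. Finally, for tightness I would take $F_d=\{x\in L:\rank(x)\le d\}$, which has exactly $\sqbinom{L}{0}+\cdots+\sqbinom{L}{d}$ elements: it shatters every element $s$ of rank $\le d$ (realize each $t\le s$ by $a=t\in F_d$), and it shatters no element of rank $>d$ (realizing $t=s$ would require some $a\ge s$ of rank $>d$), so $\VC(F_d)=d$.
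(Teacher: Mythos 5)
Your proposal is correct and follows essentially the same route as the paper: the polynomial/linear-algebra method with the functions $z_s(a)=\ind{s\le a}$ (the paper's $\charf{s}$), the M\"obius-inversion elimination lemma on the interval $[t_0,s]$ with top coefficient $\mu(t_0,s)\ne 0$ (the paper's Lemma~\ref{lem:elimination}), and the same tight example $F_d$. The only cosmetic difference is that you obtain spanning of $\mathbb{Q}^F$ directly by inverting to the point indicators $\delta_a$, whereas the paper first shows $\Charf{\lattice}$ is a basis of $\FF[\lattice]$ and formalizes the downward elimination with a potential function; both are sound.
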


The condition of nonvanishing M\"obius function is satisfied, for example, by a large class of lattices, namely geometric lattices, which are lattices whose elements are the flats of a finite matroid. Boolean lattices, which are the focus of the classical VC theory, also fall under this category. Another particularly compelling example of a geometric lattice is the lattice of all subspaces of $\FF_q^n$, where $\FF_q$ is the finite field of order $q$. 

Just as in the classical case, \Cref{thm:main-intro} follows from the next inequality corresponding to \Cref{eq:2}: 
\begin{theorem}[Babai and Frankl \cite{BF92}] \label{thm:main-intro-2}
If $L$ has nonvanishing M\"obius function then every family $F \subseteq L$ shatters at least $|F|$ elements.
\end{theorem}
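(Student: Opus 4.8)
The plan is to give a linear-algebraic proof showing that the shattered elements index a spanning set of the $|F|$-dimensional space $\mathbb{R}^F$ of real functions on $F$. For each $s \in L$, define the vector $e_s \in \mathbb{R}^F$ by $e_s(a) = \ind{s \le a}$ for $a \in F$; these play the role of restrictions of monomials to a point set in the Boolean case. The goal is to prove that $\{e_s : s \text{ is shattered by } F\}$ already spans $\mathbb{R}^F$, which immediately yields that the number of shattered elements is at least $\dim \mathbb{R}^F = |F|$.

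First I would check that the full collection $\{e_s : s \in L\}$ spans $\mathbb{R}^F$. This follows from M\"obius inversion over the principal up-sets: for each $a \in F$ one has $\delta_a = \sum_{s \ge a}\mu(a,s)\, e_s$ as an identity in $\mathbb{R}^F$, using the standard identity $\sum_{a \le s \le b}\mu(a,s) = \ind{a = b}$. Since the indicator functions $\delta_a$ span $\mathbb{R}^F$, so do the $e_s$.

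The heart of the argument, and the step I expect to be the main obstacle, is showing that every non-shattered $e_s$ can be eliminated from this spanning set. Fix $s$ and work inside the interval $[\hat 0, s]$. Writing $f_t(a) = \ind{a \meet s = t}$ for $t \le s$, one has $e_t = \sum_{t \le t' \le s} f_{t'}$ (since $t \le a \meet s \le s$), so by M\"obius inversion inside $[\hat 0, s]$ we get $f_t = \sum_{t \le t' \le s}\mu(t, t')\, e_{t'}$. Now $s$ fails to be shattered exactly when $f_{t_0} = 0$ for some $t_0 \le s$; the resulting relation $\sum_{t_0 \le t' \le s}\mu(t_0, t')\, e_{t'} = 0$ isolates the top term to give
\[
\mu(t_0, s)\, e_s = -\sum_{t_0 \le t' < s}\mu(t_0, t')\, e_{t'}.
\]
This is precisely where the nonvanishing of the M\"obius function is essential: because $\mu(t_0, s) \ne 0$, I can divide and express $e_s$ as a linear combination of vectors $e_{t'}$ with $t' < s$, each of strictly smaller rank.

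Finally I would close the argument by downward induction on rank: every $e_s$ lies in the span of $\{e_{s'} : s' \text{ shattered}\}$ --- trivially if $s$ itself is shattered, and otherwise by substituting the relation above and applying the induction hypothesis to the strictly smaller elements $t'$ (the minimum $\hat 0$ is always shattered, anchoring the induction, and a degenerate $f_{t_0}=0$ with $t_0 = s$ simply forces $e_s = 0$). Hence the shattered elements span $\mathbb{R}^F$, so there are at least $\dim \mathbb{R}^F = |F|$ of them, as claimed.
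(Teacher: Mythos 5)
Your proposal is correct and follows essentially the same route as the paper: the same indicator vectors $\chi_s|_F$, the same M\"obius-inversion elimination of non-shattered elements (your relation $f_{t_0}=\sum_{t_0\le t'\le s}\mu(t_0,t')e_{t'}=0$ is exactly the paper's Lemma~\ref{lem:elimination}), and the same conclusion that the shattered elements yield a spanning set of $\FF[F]$; the only cosmetic differences are that you verify spanning of $\{e_s\}$ directly via $\delta_a=\sum_{s\ge a}\mu(a,s)e_s$ rather than through a basis lemma for $\FF[\lattice]$, and close with induction rather than a potential function.
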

For completeness, we provide a proof of \Cref{thm:main-intro-2} in \Cref{sec:3}.

\subsection{Towards a Characterization of Sauer-Shelah-Perles Lattices}
We will call a lattice satisfying the conclusion of \Cref{thm:main-intro-2} an SSP lattice. 
The main goal of this manuscript is to characterize such lattices.
Note that the SSP property makes sense even for lattices which are not ranked.
Having a nonvanishing M\"obius function is sufficient for a lattice to be SSP, but this condition is not necessary. Two examples are given in Figure~\ref{fig:specialexample-a} and Figure~\ref{fig:rankedRClattice_mu0}. Both lattices are SSP, but the M\"obius function vanishes on the interval consisting of the entire lattice. (The first example is simpler, but the lattice is not ranked.)




\begin{figure}[h]
	
	\begin{center}
		\begin{tikzpicture}
		\node (0) at (0.5,0) {$0$};
		\node (1) at (0,1) {$1$};
		\node (2) at (1,1) {$2$};
		\node (3) at (2,1) {$3$};
		\node (4) at (-1,1.5) {$4$};
		\node (12) at (0,2) {$12$};
		\node (13) at (1,2) {$13$};
		\node (23) at (2,2) {$23$};
		\node (1234) at (0.5,3) {$1234$};
		\draw (0) -- (4) -- (1234);
		\draw (0) -- (1) -- (12) --(1234);
		\draw (0) -- (2) -- (23) --(1234);
		\draw (0) -- (3) -- (13) --(1234);
		\draw (2) -- (12);
		\draw (1) -- (13);
		\draw (3) -- (23);
		\end{tikzpicture}
	\end{center}
	\caption{SSP lattice with vanishing M\"obius function: $\mu(0,1234) = 0$} \label{fig:specialexample-a}
\end{figure}
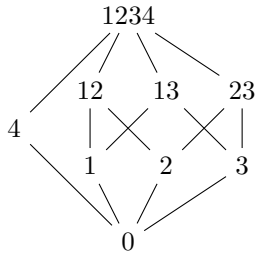

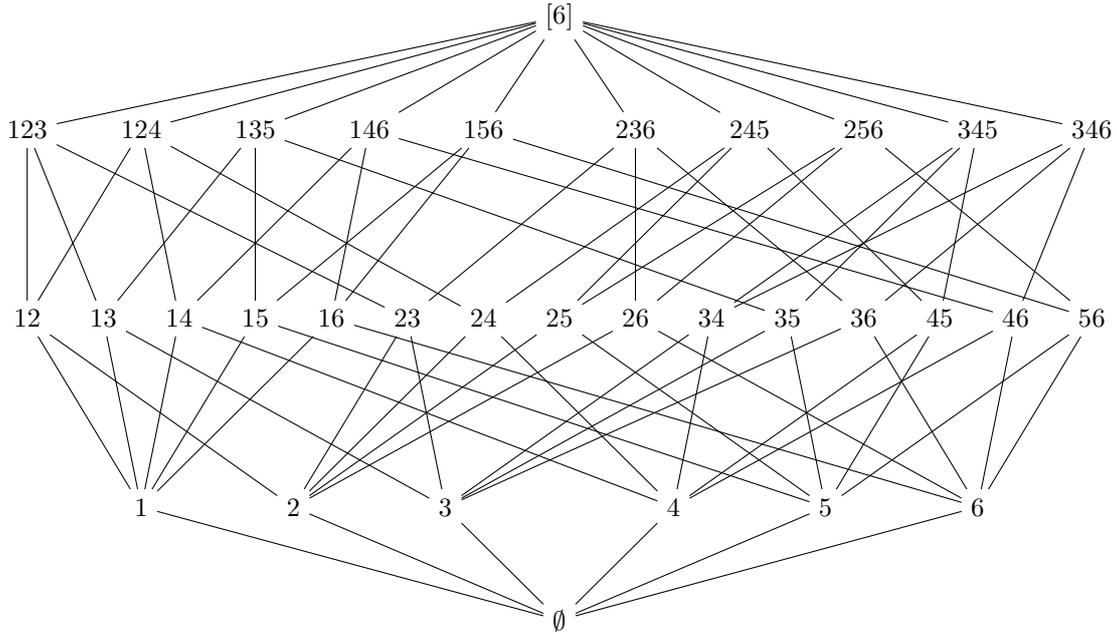
\begin{figure}[h]
	\begin{center}
		\begin{tikzpicture}
		\node (0) at (2.5,0) {$\emptyset$};
		\node (1) at (-3,1.5) {$1$};
		\node (2) at (-1,1.5) {$2$};
		\node (3) at (1,1.5) {$3$};
		\node (4) at (4,1.5) {$4$};
		\node (5) at (6,1.5) {$5$};
		\node (6) at (8,1.5) {$6$};

		\node (12) at (-4.5,4){$12$};
		\node (13) at (-3.5,4){$13$};
		\node (14) at (-2.5,4) {$14$};
		\node (15) at (-1.5,4) {$15$};
		\node (16) at (-0.5,4) {$16$};		
		\node (23) at (0.5,4) {$23$};
		\node (24) at (1.5,4) {$24$};
		\node (25) at (2.5,4) {$25$};
		\node (26) at (3.5,4) {$26$};			
		\node (34) at (4.5,4) {$34$};
		\node (35) at (5.5,4) {$35$};
		\node (36) at (6.5,4) {$36$};		
		\node (45) at (7.5,4) {$45$};
		\node (46) at (8.5,4) {$46$};
		\node (56) at (9.5,4) {$56$};	

		\node (123) at (-4.5,6.5) {$123$};
		\node (124) at (-3,6.5) {$124$};
		\node (135) at (-1.5,6.5) {$135$};
		\node (146) at (0,6.5) {$146$};	
		\node (156) at (1.5,6.5) {$156$};		
		
		\node (236) at (3.5,6.5) {$236$};
		\node (245) at (5,6.5) {$245$};
		\node (256) at (6.5,6.5) {$256$};
		\node (345) at (8,6.5) {$345$};	
		\node (346) at (9.5,6.5) {$346$};	
		
		\node (123456) at (2.5,8) {$[6]$};

		\draw (0) -- (1) -- (12) --(123)--(123456);
		\draw (0) -- (2) -- (23) --(236)--(123456);
		\draw (0) -- (3) -- (13) --(135)--(123456);
		\draw (0) -- (4) -- (34) --(345)--(123456);
		\draw (0) -- (5) -- (45) --(245)--(123456);
		\draw (0) -- (6) -- (56) --(256)--(123456);
		\draw (1) -- (13);
		\draw (1) -- (14);
		\draw (1) -- (15);
		\draw (1) -- (16);
		\draw (2) -- (12);
		\draw (2) -- (24);
		\draw (2) -- (25);
		\draw (2) -- (26);
		\draw (3) -- (23);
		\draw (3) -- (34);
		\draw (3) -- (35);
		\draw (3) -- (36);
		\draw (4) -- (14);
		\draw (4) -- (24);
		\draw (4) -- (45);
		\draw (4) -- (46);
		\draw (5) -- (15);
		\draw (5) -- (25);
		\draw (5) -- (35);
		\draw (5) -- (56);
		\draw (6) -- (16);
		\draw (6) -- (26);
		\draw (6) -- (36);
		\draw (6) -- (46);
		
		\draw (124) -- (123456);
		\draw (146) -- (123456);
		\draw (156) -- (123456);
		\draw (346) -- (123456);
		\draw (12) -- (124);
		\draw (13) -- (123);
		
		\draw (14) -- (124);
		\draw (14) -- (146);
		
		\draw (15) -- (135);
		\draw (15) -- (156);
		
		\draw (34) -- (346);
		\draw (23) -- (123);
		\draw (45) -- (345);
		\draw (56) -- (156);
		
		\draw (24) -- (124);
		\draw (24) -- (245);
		
		\draw (16) -- (146);
		\draw (16) -- (156);
		
		\draw (25) -- (245);
		\draw (25) -- (256);
		\draw (26) -- (256);
		\draw (26) -- (236);
		\draw (35) -- (345);
		\draw (35) -- (135);
		\draw (36) -- (346);
		\draw (36) -- (236);
		\draw (46) -- (346);
		\draw (46) -- (146);
		\end{tikzpicture}
	\end{center}
	
	\caption{SSP lattice with vanishing M\"obius function: $\mu(0,[6]) = 0$}
	\label{fig:rankedRClattice_mu0}
\end{figure} 

On the other hand, we can identify a large class of lattices which are not SSP:
\begin{theorem}\label{thm:notRCnotSSP}
	Let $L$ be a lattice which is not relatively complemented, i.e., contains an induced copy of $0 < 1 < 2$.
	Then there exists a family $F \subset L$ shattering strictly fewer than $\lvert F \rvert$ elements.
\end{theorem}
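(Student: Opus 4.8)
The plan is to exploit the three-element interval produced by the failure of relative complementation. Since $L$ is not relatively complemented, it contains an interval $[a,b]=\{a,c,b\}$ with $a \lessdot c \lessdot b$ and nothing strictly between $a$ and $b$; equivalently, $c$ has no complement in $[a,b]$, and correspondingly $\mu(a,b)=0$. First I would reduce to the case $b=\hat 1$. Passing to the principal ideal $I=[\hat 0,b]$ is harmless: $I$ is a sublattice, since it is closed under $\meet$ and $\join$, it still contains the interval $[a,b]$, and for any family $F\subseteq I$ the collection of elements it shatters is the same whether computed in $I$ or in $L$. Indeed, if $F$ shatters $s$ then some $f\in F$ satisfies $f\meet s=s$, i.e.\ $s\le f\le b$, so every shattered element already lies in $I$, and all relevant meets $f\meet s$ also stay inside $I$. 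Hence a family inside $I$ that shatters fewer elements than it has does the same inside $L$, and we may assume $b=\hat 1$.

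With $b=\hat 1$ fixed, the model example is the bare three-chain, where the family $F=\{c,\hat 1\}$ shatters only the bottom: the two available meets with $c$ are $c\meet c=c$ and $\hat 1\meet c=c$, so the value $a$ needed to shatter $c$ is never attained, and likewise $\hat 1$ is not shattered. I therefore take $F=\{c,\hat 1\}$ as the primary candidate. A direct computation shows that in general this $F$ shatters exactly $\hat 0$ together with the atoms $s$ satisfying $s\not\le c$ (for such an atom $c\meet s=\hat 0$ and $\hat 1\meet s=s$ exhaust its principal ideal, while any element of rank at least two has a principal ideal too large to be shattered by two elements). Thus $F$ already beats the bound as soon as every atom lies below $c$. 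The symmetric candidate $F'=\{a,c\}$ shatters $\hat 0$ together with the atoms $s\le c$ that are not below $a$, and works whenever no such atom exists.

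The main obstacle — and the precise point where relative complementation must be used in full — is that these two candidates can \emph{both} fail simultaneously: an atom lying outside $\downarrow c$ spoils $\{c,\hat 1\}$, while an atom inside $\downarrow c$ but outside $\downarrow a$ spoils $\{a,c\}$, and a single lattice can contain both kinds of \emph{spurious} atoms at once. Ruling this out by a purely local choice of family seems impossible, so I expect the real argument to be a rank-type deficiency statement rather than an explicit two-element family. Concretely, I would reverse-engineer the proof of \Cref{thm:main-intro-2}: that proof realizes $\lvert F\rvert\le\#\{\text{shattered}\}$ by an injection (or a full-rank meet/Möbius matrix), and it is exactly the relation $\mu(a,b)=0$ on the uncomplemented interval that creates a linear dependence. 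The plan is to convert this dependence into a family $F$ whose characteristic data is degenerate, forcing at least one fewer shattered element than $\lvert F\rvert$, possibly after enlarging $F$ by the ideal $\downarrow a$ so as to absorb the spurious atoms into elements that are shattered regardless. Carrying out this bookkeeping — controlling exactly which elements outside $[a,b]$ become shattered — is the crux and the most delicate part of the proof.
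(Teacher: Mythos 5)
Your proposal does not reach a proof: after the (correct but ultimately unnecessary) reduction to the case $b=\hat 1$, you offer two candidate families, $\{c,\hat 1\}$ and $\{a,c\}$, correctly observe that each can fail and that both can fail simultaneously, and then defer the general case to a planned M\"obius/linear-algebra argument whose ``bookkeeping'' you yourself flag as the unresolved crux. That deferred step is exactly where the content of the theorem lies, so as it stands the argument has a genuine gap. Moreover, the plan you sketch for closing it points in a harder direction than needed: reverse-engineering the spanning-set proof of \Cref{thm:main-intro-2} would at best show that a particular \emph{method} of certifying $|F|\le|\Str(F)|$ breaks down when $\mu$ vanishes on the bad interval; it would not by itself produce a family violating the inequality.

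The fix is an explicit construction close to the ``enlarge by an ideal'' idea you mention in passing, and it needs no M\"obius function at all. Let $[x,y]=\{x,z,y\}$ be the three-element interval and take
\[
F=\{w\in L \mid w\le y\}\setminus\{x\},
\]
i.e.\ the full principal ideal below the \emph{top} of the interval with only the \emph{bottom} of the interval deleted. Every shattered element $s$ satisfies $s\le f$ for some $f\in F$, hence $s\le y$, so $\Str(F)\subseteq\downarrow y$. To shatter $y$ one needs $f\in F$ with $f\meet y=x$, forcing $f=x$; to shatter $z$ one needs $f\in F$ with $f\meet z=x$, which forces $x\le f\le y$, i.e.\ $f\in\{x,z,y\}$, and again only $f=x$ works. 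Since $x\notin F$, neither $y$ nor $z$ is shattered, so $|\Str(F)|\le|\downarrow y|-2=|F|-1$. The point your two-element candidates miss is that deleting a single element from a downward-closed family can destroy the shattering of \emph{two} elements at once precisely because the interval $[x,y]$ has only one element strictly between its endpoints; taking the whole ideal $\downarrow y$ absorbs all the ``spurious'' elements that troubled you.
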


The lattices in Figure~\ref{fig:specialexample-a} and Figure~\ref{fig:rankedRClattice_mu0} are both relatively complemented.
Indeed, we conjecture that induced copies of $0 < 1 < 2$ are the only obstructions for the SSP property:

\begin{conj}[$SSP=RC$]\label{conj:SSP=RC}
A relatively complemented lattice is SSP.
\end{conj}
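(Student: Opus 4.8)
The plan is to prove \Cref{conj:SSP=RC} by induction on the length of $L$, adapting the deletion argument behind the classical proofs of Pajor and of Aharoni--Holzman, with relative complementation playing the structural role that distributivity plays in the Boolean cube. Because relatively complemented lattices may have vanishing M\"obius function (\Cref{fig:specialexample-a}, \Cref{fig:rankedRClattice_mu0}), the linear-algebraic proof of \Cref{thm:main-intro-2} is unavailable, so I would argue combinatorially. Throughout write $\mathrm{Sh}(F)\subseteq L$ for the set of elements shattered by $F$; the goal is $|F|\le|\mathrm{Sh}(F)|$. Since $L$ is RC the whole interval $[\hat0,\hat1]=L$ is complemented, every interval of $L$ is again RC, and proper intervals have smaller length, so the inductive hypothesis will be available on them. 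Fix a coatom $c$ of $L$ and consider the trace map $\phi\colon L\to[\hat0,c]$, $\phi(a)=a\wedge c$, together with the trace family $\phi(F)=\{a\wedge c: a\in F\}\subseteq[\hat0,c]$.

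The induction splits $\mathrm{Sh}(F)$ according to $c$. For $s\le c$ and any $t\le s$ one has $a\wedge s=(a\wedge c)\wedge s=\phi(a)\wedge s$, so $s$ is shattered by $F$ if and only if it is shattered by $\phi(F)$ inside the RC lattice $[\hat0,c]$; hence the inductive hypothesis applied to $\phi(F)$ yields $|\phi(F)|\le|\{s\in\mathrm{Sh}(F):s\le c\}|$. Writing $F_b=\{a\in F:a\wedge c=b\}$ for the fiber over $b\in\phi(F)$, the surplus of $F$ over its trace is $|F|-|\phi(F)|=\sum_{b}(|F_b|-1)$. Adding the two parts, the conjecture reduces to the single inequality $|F|-|\phi(F)|\le|\{s\in\mathrm{Sh}(F):s\not\le c\}|$, i.e.\ the surplus is dominated by the number of shattered elements that are \emph{not} below the coatom (equivalently, elements $s$ with $s\vee c=\hat1$).

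To produce these upper shattered elements I would fix a complement $p$ of $c$ (so $c\vee p=\hat1$ and $c\wedge p=\hat0$) and introduce a \emph{link family} on $[\hat0,c]$ recording, for each base $b$, the fiber elements lying strictly above $c$. In the Boolean cube with $p=\{i\}$ each fiber $F_b$ lies in $\{b,\,b\vee p\}$, the link is $F^\ast=\{b:\ b,\,b\vee p\in F\}$, the map $s_0\mapsto s_0\vee p$ carries $\mathrm{Sh}(F^\ast)$ bijectively onto the upper shattered elements, and the inductive hypothesis on $F^\ast$ over $[\hat0,c]$ gives exactly $|F|-|\phi(F)|=|F^\ast|\le|\mathrm{Sh}(F^\ast)|$. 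The plan is to mirror this: build the link so that lifting its shattered elements by $p$ lands inside $\{s\in\mathrm{Sh}(F):s\not\le c\}$, and then close the induction on the shorter lattice $[\hat0,c]$.

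The main obstacle is precisely where distributivity is lost. In an RC lattice the elements $a\in F_b$ with $a\not\le c$ are exactly the members of $F$ that are complements of $c$ inside the interval $[b,\hat1]$, and such complements can be numerous; there is no product decomposition $L\cong[\hat0,c]\times[\hat0,p]$ to force $|F_b|\le2$. Consequently a single link family with one base per fiber recovers only one unit of surplus per base, whereas $\sum_b(|F_b|-1)$ may be larger, and one must show that the additional complements of $c$ in each $[b,\hat1]$ translate into correspondingly many distinct shattered elements above $c$. I expect this to be the crux, and I would attack it by using relative complementation quantitatively inside the intervals $[b,\hat1]$---choosing, for each surplus fiber element, a relative complement that certifies a new shattered element---and, if a term-by-term injection proves too rigid, by replacing it with a global Hall-type matching between surplus elements and upper shattered elements. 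A parallel line worth developing is the dual compression approach: since every order ideal shatters all of its own elements, it would suffice to exhibit a length-decreasing ``down-shift'' across a prime quotient that preserves $|F|$, does not increase $|\mathrm{Sh}(F)|$, and terminates at an ideal; but verifying that such a shift destroys no shattered element runs into the same non-distributive difficulty, so relative complementation of the relevant intervals would again be the essential input. This reduction is consistent with \Cref{thm:notRCnotSSP}: an induced $0<1<2$ is a three-element interval whose middle element has no relative complement, exactly the local configuration that makes the surplus impossible to absorb.
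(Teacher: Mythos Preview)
The statement you are attempting is \Cref{conj:SSP=RC}, which is an \emph{open conjecture} in the paper; there is no proof to compare against. The paper only establishes partial results (\Cref{thm:mu_vanishing_once}, \Cref{thm:productSSP}, \Cref{thm:n1SSP}).

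Your proposal is not a proof but a strategy, and you say so yourself: after correctly reducing the problem to the surplus inequality
\[
\sum_{b\in\phi(F)}\bigl(|F_b|-1\bigr)\ \le\ \bigl|\{s\in\Str(F):s\not\le c\}\bigr|,
\]
you write ``I expect this to be the crux, and I would attack it by\ldots'' and then list ideas (relative complements in $[b,\hat1]$, a Hall-type matching, a down-shift) without carrying any of them out. So the gap is exactly where you locate it, and it is genuine.

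There is also external evidence that this particular route is hard. The paper remarks explicitly (end of \Cref{sec:introduction}) that ``the other proof methods --- induction and shifting --- seem to fail even for the particular case of subspace lattices.'' Your plan is precisely an induction-on-length via a coatom trace, i.e.\ the lattice analogue of the Pajor/Aharoni--Holzman deletion proof, together with a shifting variant as a backup. In the subspace lattice $\FF_q^n$, for a hyperplane $c$ and $b\le c$ with $\dim b=k$, the fiber $F_b$ can contain up to $q^{n-1-k}$ complements of $c$ in $[b,\hat1]$, so the surplus over a single base can be large; your sketch gives no mechanism that converts each extra complement into a \emph{distinct} shattered element $s\not\le c$. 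The Boolean argument works because the fiber has size at most $2$ and the lift $s_0\mapsto s_0\vee p$ is a bijection onto the upper shattered set; neither fact survives the loss of distributivity, and relative complementation alone does not obviously supply the required injection. Until that step is made precise, the proposal remains a heuristic outline rather than a proof.
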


Figure~\ref{fig:counterexample-b} on page~\pageref{fig:counterexample-b} gives an example of a non-RC (and thus non-SSP) ranked lattice, which nevertheless satisfies the conclusion of Theorem~\ref{thm:main-intro}. Characterization of such lattices is thus a separate problem, which we do not address in this paper.

As partial progress towards Conjecture~\ref{conj:SSP=RC}, we prove it for lattices such that $\mu(x,y) \neq 0$ for all $x\leq y$ except possibly when $x$ is the minimal element and $y$ is the maximal element:

\begin{theorem}\label{thm:mu_vanishing_once}
	Let $L$ be an RC lattice with minimal element $0$ and maximal element $e$. If $\mu(x,y) \neq 0$ whenever $(x,y) \ne (0,e)$ then $L$ is SSP.
\end{theorem}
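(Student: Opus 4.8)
The plan is to run the linear-algebraic argument behind \Cref{thm:main-intro-2} and to isolate the single place where the hypothesis could fail, namely the full interval $[0,e]$. Recall the setup: working over $\mathbb{R}$, form the matrix $A$ with rows indexed by $F$ and columns by $L$, with $A[a,s]=\mathbf{1}[s\le a]$. The full $L\times L$ matrix $(\mathbf{1}[s\le a])$ is (up to transpose) the invertible zeta matrix, so the $|F|$ rows are independent and the columns $\{A[\cdot,s]\}_{s\in L}$ span $\mathbb{R}^F$. The engine of the proof is a reduction identity: if $s$ is \emph{not} shattered, witnessed by a value $t_0\le s$ with $a\meet s\ne t_0$ for every $a\in F$, then Möbius inversion of $\mathbf{1}[t\le a\meet s]=\sum_{t\le t'\le s}\mathbf{1}[a\meet s=t']$ over the interval $[t_0,s]$ gives $\sum_{t_0\le t'\le s}\mu(t_0,t')\,A[\cdot,t']=0$ in $\mathbb{R}^F$, whence $\mu(t_0,s)\,A[\cdot,s]=-\sum_{t_0\le t'<s}\mu(t_0,t')\,A[\cdot,t']$. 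Thus, whenever the witness satisfies $\mu(t_0,s)\ne0$, the column $A[\cdot,s]$ lies in the span of strictly lower columns and may be discarded. Processing $L$ from top to bottom and discarding every non-shattered column in this way leaves a spanning set contained in the shattered elements, proving $\lvert\{\text{shattered}\}\rvert\ge|F|$.

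The point is that this reduction can stall only when the only available witness $t_0$ forces the pair $(t_0,s)=(0,e)$, the unique pair on which $\mu$ is allowed to vanish. A short case analysis pins this down. The offending column must be $s=e$, whose achieved meet-values $\{a\meet e:a\in F\}$ are exactly $F$, so its missing witnesses are the elements of $L\setminus F$; since $\mu(t_0,e)\ne0$ for every $t_0\ne0$, the reduction fails only when $L\setminus F=\{0\}$, i.e.\ $F=L\setminus\{0\}$ (and $\mu(0,e)=0$). For every other family the reduction never invokes $\mu(0,e)$: for a non-shattered $s=e$ one simply chooses a nonzero witness $t_0\in L\setminus F$, which exists precisely because $F\ne L\setminus\{0\}$, so the proof of \Cref{thm:main-intro-2} applies verbatim and $F$ shatters at least $|F|$ elements.

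It remains to treat the single family $F=L\setminus\{0\}$ directly, and this is the step that uses relative complementation. I claim $F$ shatters every element except $e$. Fix $s\ne e$ and $t\le s$: if $t\ne0$ then $a=t\in F$ gives $a\meet s=t$; if $t=0$, then since the interval $[0,e]=L$ of a relatively complemented lattice is complemented, $s$ has a complement $s'$ with $s\meet s'=0$ and $s\join s'=e$, and $s\ne e$ forces $s'\ne0$, so $a=s'\in F$ gives $a\meet s=0$. Hence every $s\ne e$ is shattered, while $e$ is not (shattering it would require a nonzero $a$ with $a=a\meet e=0$). Therefore $F$ shatters exactly $|L|-1=|F|$ elements and SSP holds with equality. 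Combining the two cases proves the theorem.

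The main obstacle is the bookkeeping of the second paragraph: verifying that the \emph{only} configuration in which the Babai--Frankl reduction can stall is the family $F=L\setminus\{0\}$, so that exactly one genuinely new case must be handled, and then recognizing that relative complementation is precisely what supplies, for each $s\ne e$, the nonzero element meeting $s$ trivially that certifies shattering. Everywhere else the vanishing of $\mu(0,e)$ is simply never needed.
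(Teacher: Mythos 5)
Your proof is correct, and it shares the paper's overall architecture---the spanning/elimination argument of \Cref{thm:main-intro-2} together with a separate, structural treatment of one exceptional family---but you handle the critical elimination step at $z=e$ by a different and arguably cleaner route. The paper's \Cref{lem:adapted3.7} deals with the case $z=e$, $\mu(0,e)=0$ by computing the full space of linear dependencies among the column vectors $v_p(y)=\charf{y}(p)$ (showing any dependency must satisfy $c_a=\mu(a,e)c_e$), and then uses the existence of a nonzero element outside $F$ to conclude that the columns indexed by $F$ are independent, so that the rows $\charf{y}|_F$ with $y<e$ span $\FF[F]$. You instead exploit the special shape of the witnesses at $z=e$: since $a\meet e=a$, the set of non-achieved meet-values is exactly $L\setminus F$, so whenever $F\neq L$ and $F\neq L\setminus\{0\}$ a nonzero witness $t_0$ is available, and the M\"obius-inversion identity of \Cref{lem:elimination} runs over the interval $[t_0,e]$ where $\mu(t_0,e)\neq 0$ by hypothesis; the pair $(0,e)$ is simply never invoked. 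This bypasses the dependency-space computation entirely while proving the same elimination statement. Your treatment of the exceptional family $F=L\setminus\{0\}$ via a complement $s'$ of $s$ (with $s\meet s'=0$ and $s'\neq 0$ forced by $s\neq e$) is essentially the paper's argument via atoms (\Cref{lem:atomic}), since atomicity of RC lattices is itself a consequence of complementation; both yield $\Str(L\setminus\{0\})=L\setminus\{e\}$ and hence equality in the SSP bound for that family.
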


We also show that the SSP property is preserved under product:


\begin{theorem}\label{thm:productSSP}
	If two lattices $L$ and $K$ are SSP, then so is $L \times K$.
\end{theorem}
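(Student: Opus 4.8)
The plan is to prove the product theorem by applying the SSP property twice, once in each coordinate, and gluing the two applications together with a short composition lemma. Throughout I write elements of $L \times K$ as pairs $(x,y)$, so that the meet is coordinatewise, $(x_1,y_1) \meet (x_2,y_2) = (x_1 \meet x_2,\, y_1 \meet y_2)$, and $(s,t)$ is shattered by a family $G \subseteq L \times K$ precisely when for every $u \le s$ and every $v \le t$ there is some $(a,b) \in G$ with $a \meet s = u$ and $b \meet t = v$. For $x \in L$, let $G_x = \{\, y \in K : (x,y) \in G \,\}$ be the fiber of $G$ over $x$; the fibers partition $G$, so $\lvert G\rvert = \sum_{x \in L} \lvert G_x\rvert$. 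I abbreviate by $\mathrm{Sh}(\cdot)$ the set of elements shattered by a family in whichever lattice is understood.

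First I would apply the SSP property of $K$ to each fiber: since $G_x$ is a family in $K$ it shatters at least $\lvert G_x\rvert$ elements, so $\lvert G\rvert \le \sum_{x \in L} \lvert \mathrm{Sh}(G_x)\rvert$. Exchanging the order of summation, I rewrite the right-hand side as $\sum_{t \in K} \lvert H_t\rvert$, where $H_t = \{\, x \in L : t \in \mathrm{Sh}(G_x) \,\}$ collects the base points whose fiber shatters $t$. Each $H_t$ is a family in $L$, so the SSP property of $L$ gives $\lvert H_t\rvert \le \lvert \mathrm{Sh}(H_t)\rvert$, and a second exchange of summation yields $\lvert G\rvert \le \#\{\, (s,t) \in L \times K : s \in \mathrm{Sh}(H_t) \,\}$.

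Next I would establish the composition lemma, which is the heart of the matter: if $s \in \mathrm{Sh}(H_t)$ then $(s,t) \in \mathrm{Sh}(G)$. To see this, fix $u \le s$ and $v \le t$. Because $H_t$ shatters $s$, there is some $x \in H_t$ with $x \meet s = u$; and because membership $x \in H_t$ means exactly that $G_x$ shatters $t$, there is some $b \in G_x$ with $b \meet t = v$. Then $(x,b) \in G$ and $(x,b) \meet (s,t) = (u,v)$, which is precisely what shattering of $(s,t)$ requires. Hence $\{(s,t) : s \in \mathrm{Sh}(H_t)\} \subseteq \mathrm{Sh}(G)$, and chaining this containment with the inequalities above gives $\lvert G\rvert \le \lvert \mathrm{Sh}(G)\rvert$, so $L \times K$ is SSP.

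The two counting steps are routine Fubini-style exchanges, so the step I expect to demand the most care is the composition lemma. The key point is to invoke the two shattering hypotheses in the correct order: one first uses shattering in $L$ to locate a base point $x$ realizing the prescribed $u$, and only then uses that the fiber $G_x$ shatters $t$ to realize $v$ inside that fiber. It is worth stressing that the witnesses $(x,b)$ for different pairs $(u,v)$ may be entirely unrelated, so no global consistency is needed — this is exactly what lets the coordinatewise meet cooperate. I would also note that the argument invokes the SSP hypothesis only for the full lattices $L$ and $K$, applied to the families $G_x$ and $H_t$, and never to intervals or sublattices, so no auxiliary closure properties of SSP lattices are required.
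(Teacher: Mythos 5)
Your proof is correct and is essentially the same as the paper's: your fibers $G_x$, the sets $\mathrm{Sh}(G_x)$, $H_t$, and $\mathrm{Sh}(H_t)$ correspond exactly to the paper's $F_k$, $I_k$, $G_\ell$, $J_\ell$ (with the roles of the two factors swapped), and your composition lemma is precisely the paper's final shattering argument. The Fubini-style double counting matches as well, so there is nothing to add.
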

This theorem implies, via a structural result of Dilworth~\cite{Dil50}, that it suffices to prove Conjecture~\ref{conj:SSP=RC} for \emph{simple} relatively complemented lattices (see Dilworth's paper for a definition).

If we take a large power of any RC lattice satisfying the prerequisites of Theorem~\ref{thm:mu_vanishing_once} (such as the ones in Figure~\ref{fig:specialexample-a} and Figure~\ref{fig:rankedRClattice_mu0}) then we get an SSP lattice whose M\"obius function vanishes almost everywhere. This is a striking indication that the condition of nonvanishing M\"obius function is far from being necessary for a lattice to be SSP.


We also verify the SSP property for specific families in RC lattices:

\begin{theorem}\label{thm:n1SSP}
	If $L$ is an RC lattice and $F \subset L$ is a family for which $L \backslash \Str(F)$ contains exactly one minimal element, then $\lvert F \rvert  \leq \lvert \Str(F)\rvert$.  
\end{theorem}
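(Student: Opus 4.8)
The first step is to turn the hypothesis into a statement about a principal filter. Observe that $\Str(F)$ is a down-set: if $F$ shatters $s$ and $s' \le s$, then for every $t \le s'$ a witness $a$ with $a \meet s = t$ also satisfies $a \meet s' = (a\meet s)\meet s' = t$, so $F$ shatters $s'$. Hence $L \setminus \Str(F)$ is an up-set, and having exactly one minimal element $m$ means $L \setminus \Str(F) = U$, where $U := \{x \in L : x \ge m\}$ is the principal filter on $m$. The claim thus becomes $\lvert F\rvert \le \lvert L\rvert - \lvert U\rvert$. I record for later use that (assuming $F \ne \emptyset$) the bottom $0$ is always shattered, so $m \ne 0$; that every $w < m$ is shattered (otherwise $w \in U$, i.e.\ $w \ge m$); and that, since $m$ itself is not shattered, there is a witness $t^\ast \le m$ with $a \meet m \ne t^\ast$ for all $a \in F$.

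\noindent\textbf{Main construction.} The plan is to build an injection $\Phi : F \hookrightarrow \Str(F)$. For $a \in F$ with $a \not\ge m$ we already have $a \in \Str(F)$, so I would tentatively set $\Phi(a) = a$. For $a \in F$ with $a \ge m$ I would use relative complementation: since $L$ is RC the interval $[0,a]$ is complemented, so $m \in [0,a]$ has a complement $c_a$ there, with $m \meet c_a = 0$ and $m \join c_a = a$. Then $c_a \not\ge m$ (else $m = m \meet c_a = 0$), so $c_a \in \Str(F)$, and $a$ is recovered as $a = m \join c_a$; consequently $a \mapsto c_a$ is automatically injective on $F \cap U$. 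If the two pieces can be arranged never to collide, this yields the injection and hence the bound.

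\noindent\textbf{Main obstacle.} The hard part will be injectivity across the two pieces: a chosen complement $c_a$ may coincide with some $b \in F \setminus U$ that is mapped to itself. Equivalently, one must choose the family $\{c_a\}_{a \in F \cap U}$ so that its members are pairwise distinct and avoid $F \setminus U$, i.e.\ realize a system of distinct representatives matching each $a \in F \cap U$ to a complement of $m$ in $[0,a]$ lying outside $F \setminus U$. I expect this to be the crux, and the point where both the RC hypothesis and the single-generator structure of $U$ are essential: I would try to verify the corresponding Hall condition by exploiting that every $w < m$ is shattered — so that all meets below $m$ are realized, supplying many candidate complements — together with the missing meet $t^\ast$, which should provide the slack needed to route $F \cap U$ into $\Str(F)$. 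A useful cross-check on this difficulty is the dual, linear-algebraic picture: writing $c_u = (\ind{u \le a})_{a \in F}$, the identity $\sum_{t \le u \le s} \mu(t,u)\,\ind{u \le a} = \ind{a \meet s = t}$ turns each missing meet into a linear relation among the $c_u$, and the inequality is equivalent to producing $\lvert U\rvert$ linearly independent such relations. The subtlety is precisely that $\mu(t,s)$ may vanish — as in Figure~\ref{fig:specialexample-a}, where $\mu(0,e)=0$ forces the would-be top relation to lose its leading term — so a one-shot elimination of the columns indexed by $U$ fails, and the relations must instead be combined through relative complements, mirroring the matching obstacle above.
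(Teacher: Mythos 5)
Your reductions are correct, and you have correctly identified that your argument is incomplete: the injectivity across the two pieces of $\Phi$ is a genuine gap, not a routine verification. Nothing in your construction prevents a complement $c_a$ of $m$ in $[0,a]$ from landing on some $b\in F\setminus U$ (or two different choices of complements from colliding with each other after you try to reroute), and you give no proof of the Hall condition you would need to repair this. As written the proposal does not establish the theorem.

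The paper's proof avoids the collision problem entirely by two changes to your setup, both of which you brush against but do not exploit. First, it does not inject $F$ into $\Str(F)$; it injects $N=L\setminus\Str(F)$ into a set $D$ that is \emph{disjoint from $F$ by construction}, which gives $|N|\le|D|\le|L\setminus F|$ and hence $|F|\le|\Str(F)|$ by complementation --- no interaction between the ``identity part'' and the ``complement part'' ever arises. Second, the set $D$ is built from your witness $t^\ast$: since $m$ is not shattered, there is $t^\ast\le m$ with $u\meet m\ne t^\ast$ for all $u\in F$, and one sets $D=\{u : u\meet m=t^\ast\}$, which is disjoint from $F$. For $a\ge m$ one takes the complement $c(a)$ of $m$ in the interval $[t^\ast,a]$ (not $[0,a]$), so that $c(a)\meet m=t^\ast$ puts $c(a)\in D$, and $c(a)\join m=a$ gives injectivity of $a\mapsto c(a)$ on $N$ exactly as in your argument. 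In short: replace the bottom of your complementation interval by $t^\ast$, and count in the complement. If you want to salvage your direct-injection plan, you would still need to prove the matching claim you flagged as the crux; the paper shows this is unnecessary.
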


\paragraph{On the proof.} 
All the results are stated for lattices, but they are true for meet-semilattices as well.
Note that a meet-semilattice with a maximal element is a lattice, so Theorem~\ref{thm:mu_vanishing_once} is still true in the meet-semilattice setting. The proofs of the other theorems also work in the meet-semilattice setting.

The original proofs of the Sauer--Shelah--Perles lemma used induction on~$n$. Alon~\cite{Alon83} and Frankl~\cite{Frankl83} gave an alternative proof using combinatorial shifting, and Frankl and Pach~\cite{FP83}, Anstee~\cite{Anstee85}, Gurvits~\cite{Gurvits97}, Smolensky~\cite{Smolensky97}, and Moran and Rashtchian~\cite{MR16} gave other proofs using the polynomial method, which the presented proof also employs.
The other proof methods --- induction and shifting --- seem to fail even for the particular case of subspace lattices.



\section{Preliminaries} \label{sec:preliminaries}

\paragraph{Posets.} A \emph{poset} is a partially ordered set. Unless mentioned otherwise, all posets we discuss are finite.
We will use $\leq$ to denote the partial order.
An \emph{antichain} is a collection of elements which are pairwise incomparable.
An element $x$ is \emph{covered} by $y$, denoted $x \covered y$, if $x < y$ and no element $z$ satisfies $x < z < y$.
We can describe a poset using its \emph{Hasse diagram}, which is a graph drawn on a plane, in which the vertices correspond to the elements, the edges to the covering relation, and lower elements are smaller.

A \emph{meet-semilattice} is a poset in which any two elements $x,y$ have an element $z \leq x,y$ such that $w \leq z$ whenever $w \leq x,y$. The element $z$ is denoted $x \meet y$, and is called the \emph{meet} of $x,y$.
The dual operation is the \emph{join} $x \join y$. A poset in which any two elements have both a meet and a join is known as a lattice.
The meet of all elements in a meet-semilattice is called the \emph{minimal element}, denoted by~0.
The join of all elements in a lattice (or join-semilattice) is called the \emph{maximal element}, denoted by~$e$.
An atom of a lattice is an element $x$ for which $0\covered x$.

A meet-semilattice is \emph{ranked} if each element $x$ is associated with a non-negative integer rank $r(x)$, subject to the following two constraints (which completely specify the rank): $r(0) = 0$, and $r(y) = r(x) + 1$ if $x \covered y$. Not every meet-semilattice can be ranked.
The rank of a meet-semilattice is the maximum rank of an element.
We denote the number of elements of rank $d$ in a poset $\lattice$ by $\sqbinom{\lattice}{d}$, and the number of elements of rank at most $d$ by $\sqbinom{\lattice}{\leq d}$.

The standard example of a lattice is the \emph{Boolean lattice} of all subsets of $\{1,\ldots,n\}$ ordered by inclusion. The meet of two elements is their intersection, and the join of two elements is their union. The rank of a subset is its cardinality.

The product $L\times K$ of two lattices $L,K$ is a lattice whose elements are the elements of a Cartesian product of $L$ and $K$, with the order relation $(\ell_1,k_1) \leq (\ell_2,k_2)$ iff $\ell_1 \leq \ell_2$ and $k_1 \leq k_2$. 

\paragraph{M\"obius function.} The M\"obius function of a finite poset is a function $\mu(x,y)$ defined for any two elements $x \leq y$ in the following way: $\mu(x,x) = 1$, and for $x < y$,
\[
 \mu(x,y) = - \sum_{z\colon x \leq z < y} \mu(x,z).
\]
For example, on the Boolean lattice the M\"obius function is $\mu(x,y) = (-1)^{|y \setminus x|}$, and on the integer lattice (the divisors of $n$ ordered by divisibility) the M\"obius function is $\mu(x,y) = \mu(y/x)$, where $\mu(\cdot)$ is the number-theoretic M\"obius function.

The M\"obius function is important due to the two \emph{M\"obius inversion formulas}:

\begin{lemma}[M\"obius inversion] \label{lem:mobius-inversion}
If $f,g$ are two real-valued functions on a poset then
\[
 f(x) = \sum_{y \geq x} g(y) \text{ for all } x \Longleftrightarrow
 g(x) = \sum_{y \geq x} \mu(x,y) f(y) \text{ for all } x.
\]
and
\[
 f(y) = \sum_{x \leq y} g(x) \text{ for all } y \Longleftrightarrow
 g(y) = \sum_{x \leq y} \mu(x,y) f(x) \text{ for all } y.
\]
\end{lemma}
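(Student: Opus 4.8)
The plan is to prove the first displayed equivalence by a direct interchange-of-summation argument, and then to deduce the second by passing to the order-dual poset. The engine of both directions is the pair of \emph{orthogonality relations} satisfied by $\mu$. Reading the defining recurrence $\mu(x,y) = -\sum_{z : x \le z < y}\mu(x,z)$ together with $\mu(x,x)=1$, and moving the term $\mu(x,y)$ back to the left, gives immediately
\[
 \sum_{z : x \le z \le y} \mu(x,z) = \ind{x=y} \qquad \text{for all } x \le y.
\]
This ``lower'' orthogonality relation is all the definition hands us directly; the complementary ``upper'' relation $\sum_{z : x \le z \le y}\mu(z,y)=\ind{x=y}$ requires a short separate argument, and I flag it as the single genuinely non-mechanical point below.

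First I would prove the implication $(\Rightarrow)$ of the first equivalence. Assuming $f(x)=\sum_{y\ge x}g(y)$, I substitute into the candidate expression for $g$, interchange the two nested sums, and collapse the inner sum using the lower orthogonality relation:
\[
 \sum_{y \ge x}\mu(x,y)f(y) = \sum_{z \ge x} g(z)\sum_{y : x \le y \le z}\mu(x,y) = \sum_{z \ge x} g(z)\,\ind{x=z} = g(x).
\]
The reverse implication $(\Leftarrow)$ is symmetric: substituting $g(y)=\sum_{z\ge y}\mu(y,z)f(z)$ into $\sum_{y\ge x}g(y)$ and interchanging sums leaves the inner sum $\sum_{y : x \le y \le z}\mu(y,z)$, which is exactly the upper orthogonality relation and collapses to $\ind{x=z}$, recovering $f(x)=\sum_{z\ge x}f(z)\,\ind{x=z}$.

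It remains to justify the upper orthogonality relation, which is where the only real content lies. The clean way is to work in the incidence algebra: writing $\zeta(x,y)=\ind{x\le y}$ and using the convolution product $(\alpha\beta)(x,z)=\sum_{y}\alpha(x,y)\beta(y,z)$, the lower relation says precisely that $\mu$ is a \emph{left} inverse of $\zeta$, since $(\mu\zeta)(x,z)=\sum_{x\le y\le z}\mu(x,y)=\ind{x=z}$. Fixing any linear extension of the poset and indexing elements accordingly represents $\zeta$ as an upper-triangular matrix with unit diagonal; such a matrix is invertible and its left inverse coincides with its unique two-sided inverse, so $\mu$ is also a \emph{right} inverse of $\zeta$, i.e.\ $(\zeta\mu)(x,z)=\sum_{x\le y\le z}\mu(y,z)=\ind{x=z}$, which is exactly the upper relation.

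Finally, the second displayed equivalence follows from the first applied to the order-dual poset $L^{\mathrm{op}}$. Its incidence matrix is $\zeta^{\mathrm{op}}=\zeta^{\mathsf T}$, so its Möbius function is $(\zeta^{\mathsf T})^{-1}=(\zeta^{-1})^{\mathsf T}$, whence $\mu^{\mathrm{op}}(x,y)=\mu(y,x)$; rewriting the first equivalence in $L^{\mathrm{op}}$ then turns every $\sum_{y\ge x}$ into $\sum_{x\le y}$ and every $\mu(x,y)$ into $\mu(x,y)$ in the downward orientation, yielding the claimed statement verbatim. The main obstacle, then, is not the bookkeeping of swapping sums but recognizing that the single recurrence supplied by the definition gives only one of the two orthogonality relations, and must be upgraded to a two-sided inverse statement (via triangularity in the incidence algebra) before the $(\Leftarrow)$ direction can go through.
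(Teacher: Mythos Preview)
Your proof is correct and is essentially the standard incidence-algebra argument; there is nothing to compare against, because the paper states this lemma as a well-known preliminary and offers no proof of its own. Your identification of the one nontrivial point---that the defining recurrence only gives $\mu\zeta=\delta$ and that $\zeta\mu=\delta$ requires the upper-triangularity observation---is exactly right, and the duality reduction for the second equivalence is clean.
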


We say that a poset has \emph{nonvanishing M\"obius function} if $\mu(x,y) \neq 0$ for all $x \leq y$ in the poset. For example, the Boolean lattice has nonvanishing M\"obius function, and the integer lattice has nonvanishing M\"obius function if and only if $n$ is squarefree.

\paragraph{Relatively complemented lattices.}

In a lattice $L$ which has minimal and maximal elements, the \emph{complement} of an element $y$ is an element $z$ for which $z \meet y=0$ and $z \join y=e$.
A complemented lattice is a lattice with minimal and maximal elements in which every element has a complement.
A lattice or meet-semilattice is called \emph{relatively complemented} (RC) if every interval $[x,y]=\{z \mid x \le z \le y \}$, considered as a sublattice, is complemented. In particular, an RC lattice is complemented.
We will mostly use the following equivalent characterization by Bj\"orner~\cite{B81}: 
\begin{lemma}[Bj\"orner]
	A finite lattice is RC if and only if it does not contain a $3$-element interval, i.e.\ there are no two elements $x < y$ such that there is a unique $z$ satisfying $x < z < y$.
\end{lemma}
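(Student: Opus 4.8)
The statement is an equivalence, so the plan is to prove the two implications separately, the forward one being essentially immediate and the reverse one requiring real work. For the forward direction (RC $\Rightarrow$ no $3$-element interval) I would argue by contrapositive: if $[x,y]=\{x,z,y\}$ with $x<z<y$ is a $3$-element interval, then, regarded as a sublattice, $[x,y]$ is not complemented. Indeed the middle element $z$ has no complement, since the only candidates are $x,z,y$, and $z\vee x=z\neq y$ while $z\wedge y=z\neq x$ and $z\wedge z=z\neq x$, so no $w$ satisfies both $z\wedge w=x$ and $z\vee w=y$. Hence such an $L$ is not RC.

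For the reverse direction (no $3$-element interval $\Rightarrow$ RC) I would prove directly that every interval $[a,b]$ is complemented, by induction on the length $n$ of $[a,b]$ (the number of steps in a longest chain). The key structural fact I would use throughout is that the hypothesis passes to subintervals, since an interval of $[a,b]$ is itself an interval of $L$. The cases $n\le 1$ are trivial. For $n=2$, any element $m$ strictly between $a$ and $b$ is forced to be both an atom and a coatom of $[a,b]$; the absence of a $3$-element interval guarantees a second such element $m'$, and then $m,m'$ are mutually complementary ($m\wedge m'=a$ as distinct atoms, $m\vee m'=b$ as distinct coatoms). This is the only point at which the hypothesis is actually invoked.

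The substance is the inductive step for $n\ge 3$; fix $c$ with $a<c<b$ (endpoints being trivial). The idea is to replace one complementation in $[a,b]$ by two complementations inside strictly shorter intervals. Assume first that $c$ is not an atom and choose an atom $p$ with $a\lessdot p\le c$, so $p<c$. By induction $c$ has a complement $d'$ in $[p,b]$, that is $c\wedge d'=p$ and $c\vee d'=b$, and necessarily $d'<b$ (otherwise $c\wedge d'=c\neq p$). Applying induction again, $p$ has a complement $d$ in $[a,d']$, i.e.\ $p\wedge d=a$ and $p\vee d=d'$; both $[p,b]$ and $[a,d']$ have length at most $n-1$. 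One then checks $d$ is a complement of $c$ in $[a,b]$: from $d\le d'$ we get $c\wedge d\le c\wedge d'=p$, hence $c\wedge d\le p\wedge d=a$; and from $p\le c$ we get $c\vee d\ge p\vee d=d'$, hence $c\vee d\ge c\vee d'=b$. If instead $c$ is not a coatom, the dual construction produces a complement, using that the condition "no $3$-element interval" and the property "complemented" are both self-dual. The one remaining possibility is that $c$ is simultaneously an atom and a coatom of $[a,b]$; since $n\ge 3$ this forces the existence of a second atom $p\neq c$, and $p$ itself is then a complement of $c$ ($c\wedge p=a$ as distinct atoms, and $c\vee p=b$ because $c$ is a coatom and $p\not\le c$).

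I expect the main obstacle to be the bookkeeping of the inductive step rather than any single deep idea: one must isolate the corner case in which $c$ is both an atom and a coatom (where the two-step construction degenerates) and handle it separately, and one must verify that the intervals fed to the induction hypothesis are genuinely shorter and that the assembled element $d$ satisfies both the meet and the join identity. The self-duality of the hypothesis is what lets me reduce the work, treating only the non-atom case in full and deducing the non-coatom case by symmetry.
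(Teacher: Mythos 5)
Your proof is correct, but note that the paper itself contains no proof of this lemma: it is quoted as a known characterization from the cited paper of Bj\"orner, so there is no in-paper argument to compare against, and your write-up supplies a genuinely self-contained derivation. The forward direction (the middle element of a $3$-element interval admits no complement among $x,z,y$) is right. For the converse, the induction on the length of $[a,b]$ is sound: the hypothesis does pass to subintervals; the base case $n=2$ is exactly where ``every length-$2$ interval has at least four elements'' produces the second atom/coatom $m'$; and in the main step the two chains of inequalities ($c\wedge d\le c\wedge d'=p$ together with $c\wedge d\le d$ giving $c\wedge d\le p\wedge d=a$, and $c\vee d\ge p\vee d=d'$ giving $c\vee d\ge c\vee d'=b$) both check out, as does the claim that $[p,b]$ and $[a,d']$ are strictly shorter. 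The degenerate case where $c$ is both an atom and a coatom is correctly isolated: a longest chain of length $n\ge 3$ contains an intermediate element that is not a coatom, and any atom below it must differ from $c$, since otherwise $c$ would sit strictly below a non-maximal element of $[a,b]$, contradicting $c$ being a coatom. The appeal to self-duality for the ``not a coatom'' case is legitimate because both the no-$3$-element-interval hypothesis and the complementedness conclusion are self-dual. Finiteness is used where it should be (existence of atoms, finiteness of lengths), so the argument stands as a complete proof of the cited fact.
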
	

We will use another simple property of RC lattices due to Bj\"orner:
\begin{lemma}\label{lem:atomic}
	Finite RC lattices are atomic, that is, $e$ is the join of all atoms. Equivalently, for any $x<e$ there is an atom $a$ such that $a\not\leq x$.
\end{lemma}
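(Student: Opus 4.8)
The plan is to prove the second (equivalent) formulation: for every $x < e$ there is an atom $a$ with $a \not\leq x$. I first record why the two statements coincide. Let $J$ denote the join of all atoms, so certainly $J \leq e$. If the second form holds and we had $J < e$, then applying it to $x = J$ would yield an atom $a \not\leq J$; but every atom lies below $J$ by definition of $J$, a contradiction, so $J = e$. Conversely, if $J = e$ and $x < e$, then not every atom can satisfy $a \leq x$, for otherwise $e = J \leq x$ would contradict $x < e$; hence some atom is not below $x$. Thus it suffices to establish the second formulation.

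For the main argument, fix $x < e$. Since $e \not\leq x$, the set $\{z : z \not\leq x\}$ is nonempty, and by finiteness I may choose a \emph{minimal} element $z$ of this set. I claim $z$ is an atom. Write $m = x \meet z$; because $z \not\leq x$ we have $m < z$. The key structural observation is that every $w < z$ satisfies $w \leq m$: indeed, minimality of $z$ forces $w \leq x$, and together with $w \leq z$ this gives $w \leq x \meet z = m$. In particular $m \covered z$ and $m$ is the unique lower cover of $z$.

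Now I would invoke relative complementation inside the interval $[0,z]$ (which is complemented by the RC hypothesis): there is an element $c$ with $c \meet m = 0$ and $c \join m = z$. If $c < z$, then by the observation above $c \leq m$, whence $c \join m = m \neq z$, a contradiction; hence $c = z$. But then $0 = c \meet m = z \meet m = m$, so $m = 0$. Combining $m = 0$ with the fact that every $w < z$ lies below $m$, we conclude that every element strictly below $z$ equals $0$; that is, $z$ covers $0$ and is therefore an atom with $z \not\leq x$, as required.

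I expect the main obstacle to be identifying the right element to work with: choosing $z$ \emph{minimal} among elements not below $x$ (rather than, say, an arbitrary cover of $x$) is precisely what makes the relative-complement step collapse cleanly to $m = 0$. The only place relative complementation is used is this single application in $[0,z]$, and one must take care that the RC hypothesis is invoked on the interval $[0,z]$ rather than on the whole lattice --- but this is exactly what the definition of \emph{relatively complemented} supplies. Finiteness enters only to guarantee that a minimal such $z$ exists.
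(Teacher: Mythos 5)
Your proof is correct. Note that the paper states this lemma without proof, attributing it to Bj\"orner, so there is no in-paper argument to compare against; your write-up supplies a complete and self-contained justification. The reduction between the two formulations is handled properly, and the main step is sound: taking $z$ minimal among elements not below $x$, every $w<z$ is forced below $m:=x\meet z$ by minimality, and a complement $c$ of $m$ in the interval $[0,z]$ cannot satisfy $c<z$ (else $c\leq m$ and $c\join m=m\neq z$), so $c=z$, giving $m=z\meet m=c\meet m=0$ and hence that $z$ is an atom with $z\not\leq x$. The only hypotheses used are finiteness (to pick a minimal $z$) and complementation of the single interval $[0,z]$, which is exactly what the RC assumption provides.
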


Deeper structural results on RC lattices can be found in~\cite{Dil50}.

\paragraph{Matroids and geometric lattices.}
A \emph{matroid} over a finite set $U$ is a finite non-empty collection of subsets of $U$ called \emph{indepedent sets}, satisfying the following two axioms: if a set is independent, then so are all its subsets; and if $A,B$ are independent and $|A|>|B|$, then there exists an element $x \in A \setminus B$ such that $B \cup \{x\}$ is also independent.

The \emph{rank} of a subset $S \subseteq U$ is the maximum cardinality of a subset of $S$ which is independent. The rank of a matroid is the rank of $U$. A \emph{flat} is a subset of $U$ whose supersets all have higher rank.

Given a matroid, we can construct a poset whose elements are all flats of the matroid, ordered by inclusion. This poset forms a ranked lattice, and a lattice formed in this way is called a \emph{geometric lattice}. The rank of an element in the lattice is the rank of the corresponding flat in the matroid. Weisner's theorem implies that geometric lattices have nonvanishing M\"obius functions:

\begin{theorem}[Weisner] \label{thm:weisner}
The M\"obius function of a geometric lattice satisfies $(-1)^{r(y)-r(x)} \mu(x,y) > 0$ for all $x \leq y$.
\end{theorem}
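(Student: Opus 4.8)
The plan is to first reduce the general interval statement to the top interval of a geometric lattice, and then run an induction on the rank. Every interval $[x,y]$ of a geometric lattice is again a geometric lattice: its elements are the flats $F$ with $x \le F \le y$, which are exactly the flats of the matroid obtained by restricting to $y$ and contracting $x$, and $r(y)-r(x)$ is its rank. Hence it suffices to prove that for a geometric lattice with minimal element $0$, maximal element $e$, and rank $r=r(e)$, one has $(-1)^r\mu(0,e) > 0$. I would induct on $r$; the cases $r=0$ (where $\mu(0,e)=1$) and $r=1$ (where $\mu(0,e)=-1$) are immediate.

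The engine of the induction is Weisner's identity: for any atom $a$,
\[
  \sum_{z \,:\, z \join a = e} \mu(0,z) = 0 .
\]
I would prove this by M\"obius inversion. Writing $N(b) = \sum_{z : z\join a = b}\mu(0,z)$, for any $c \ge a$ one has $\sum_{b \le c} N(b) = \sum_{z : z \join a \le c}\mu(0,z) = \sum_{z \le c}\mu(0,z)$, which is $0$ because $c \ge a > 0$; since these partial sums over the interval $[a,e]$ all vanish, inversion forces $N(b)=0$ for every $b \ge a$, in particular $N(e)=0$. Pulling out the term $z=e$ then gives
\[
  \mu(0,e) = -\sum_{\substack{z < e \\ z \join a = e}} \mu(0,z) .
\]

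It remains to control the signs and nonvanishing of the right-hand side. If $z\join a = e$ with $z \ne e$ then $a \not\le z$, and since $a$ is an atom this forces $z \meet a = 0$; submodularity of the matroid rank (semimodularity of the lattice) gives $r(e) = r(z\join a) \le r(z) + r(a) - r(z\meet a) = r(z)+1$, while $z<e$ gives $r(z) \le r-1$, so every surviving $z$ has rank exactly $r-1$. By the induction hypothesis each term then satisfies $(-1)^{r-1}\mu(0,z) > 0$, so the whole sum is sign-coherent of sign $(-1)^{r-1}$ --- as long as it is nonempty. Nonemptiness is precisely where relative complementation enters: a geometric lattice is relatively complemented, so the atom $a$ has a complement $z$ with $z\join a = e$ and $z \meet a = 0$, and $z \ne e$ since $a>0$. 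Thus the right-hand sum is a nonempty sum of terms all of sign $(-1)^{r-1}$, hence nonzero of that sign, and therefore $\mu(0,e)$ is nonzero of sign $(-1)^r$, closing the induction.

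I expect the main obstacle to be arranging these two structural inputs to cooperate: semimodularity is what collapses all surviving terms to the single rank $r-1$ (making the sum sign-coherent), while relative complementation is what keeps the sum nonempty (the only mechanism by which $\mu(0,e)$ could otherwise vanish). The sign count itself is then routine; the care is in deriving semimodularity and relative complementation from the matroid definition and in verifying Weisner's identity.
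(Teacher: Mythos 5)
Your proof is correct and complete: the reduction of a general interval $[x,y]$ to the top interval of a geometric lattice via matroid minors, the derivation of Weisner's identity by M\"obius inversion on $[a,e]$, the semimodularity argument forcing every surviving $z$ to have rank exactly $r-1$ (which makes the sum sign-coherent), and the use of relative complementation to guarantee the sum is nonempty are all sound, and together they close the induction. The paper does not actually prove this theorem --- it only cites Godsil's notes --- and your argument is the standard Rota--Weisner proof that such references give, so there is nothing further to reconcile.
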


\noindent For a proof, see~\cite[Corollary 16.3]{Godsil18}.

The collection of all subsets of $\{1,\ldots,n\}$ forms a matroid of rank $n$ whose flats are all subsets of $\{1,\ldots,n\}$. The corresponding geometric lattice is the Boolean lattice described above. A more interesting example of a matroid is the collection of all subsets of $\FF_q^n$ which are linearly independent, which forms a matroid of rank $n$ whose flats are all subspaces of $\FF_q^n$. The corresponding geometric lattice is called  the \emph{subspace lattice} of $\FF_q^n$.

\section{VC theory for lattices}\label{sec:3}

\subsection{Definitions}

In order to develop VC theory for lattices we need to define two concepts: shattering and VC dimension. We start with the more basic concept, shattering:

\begin{definition}[Shattering] \label{def:shattering}
Let $\lattice$ be a meet-semilattice. A set $F \subseteq \lattice$ \emph{shatters} an element $y \in \lattice$ if for all $x \leq y$ there exists an element $z \in F$ such that $z \meet y = x$.

We denote the set of all elements shattered by $F$ by $\Str(F)$.
\end{definition}
We comment that the definition can be extended further to general posets: in this case, the condition~$z \meet y = x$ should be understood as follows: $z \meet y$ exists, and equals $x$.

A basic property of shattering is that it is hereditary:

\begin{lemma} \label{lem:shattering-hereditary}
Let $\lattice$ be a meet-semilattice. If a set $F \subseteq \lattice$ shatters an element $z \in \lattice$ and $y \leq z$, then $F$ also shatters $y$. In other words, $\Str(F)$ is downward-closed.
\end{lemma}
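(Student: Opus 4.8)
The plan is to verify the defining condition of shattering for $y$ directly, drawing all the needed witnesses from the hypothesis that $F$ shatters the larger element $z$, and then using only elementary monotonicity properties of the meet. To show that $F$ shatters $y$, I must produce, for each $x \le y$, some $w \in F$ with $w \meet y = x$. The natural source of such witnesses is the assumption on $z$, so the whole argument hinges on checking that a witness chosen relative to $z$ also works relative to $y$.

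First I would fix an arbitrary $x \le y$ and note that, since $y \le z$, we also have $x \le z$. Shattering of $z$ then supplies an element $w \in F$ with $w \meet z = x$; this $w$ is my candidate witness for $x$ relative to $y$.

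The remaining step is to verify $w \meet y = x$ for this same $w$. For $w \meet y \le x$ I would invoke monotonicity of the meet in each argument: from $y \le z$ it follows that $w \meet y \le w \meet z = x$. For the reverse inequality $x \le w \meet y$ I would observe that $x = w \meet z \le w$ and, by hypothesis, $x \le y$, so $x$ is a common lower bound of $w$ and $y$; since $w \meet y$ is by definition the greatest such lower bound, $x \le w \meet y$. Combining the two inequalities gives $w \meet y = x$, which is exactly what shattering of $y$ requires.

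I do not expect any real obstacle here: the argument uses only that the meet is order-preserving and that it computes the greatest lower bound, both of which hold in an arbitrary meet-semilattice, so the proof applies in that full generality without change. The one point worth stating carefully is the interplay of the two elements, namely that the witness is selected for the target $x$ \emph{as an element below} $z$, and that this very witness is then shown to realize $x$ below $y$ as well. Once this is in place, the conclusion that $\Str(F)$ is downward-closed is immediate, since $x \le y$ was arbitrary.
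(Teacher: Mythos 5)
Your proof is correct and follows essentially the same route as the paper: for each $x \le y$ you take the witness $w \in F$ with $w \meet z = x$ provided by the shattering of $z$, and verify that the same $w$ satisfies $w \meet y = x$. The only cosmetic difference is that you establish this last equality by a two-sided inequality argument, whereas the paper computes $w \meet y = w \meet (y \meet z) = (w \meet z) \meet y = x \meet y = x$ directly.
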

\begin{proof}
Let $x \leq y$. Since $F$ shatters $z$ and $x \leq z$, there exists an element $w \in F$ satisfying $w \meet z = x$. Since $y \leq z$, the same element satisfies $w \meet y = w \meet (y \meet z) = (w \meet z) \meet y = x \meet y = x$.
\end{proof}

Having defined shattering, the definition of VC~dimension is obvious:

\begin{definition}
Let $\lattice$ be a ranked meet-semilattice. The \emph{VC dimension} of a non-empty set $F \subseteq \lattice$, denoted $\VC(F)$, is the maximum rank of an element shattered by $F$.
\end{definition}

These definitions specialize to the classical ones in the case of the Boolean lattice.

\subsection{Proof of \Cref{thm:main-intro-2}}\label{subsec:Proof_thm:main-intro-2}

%

Let $\FF$ be an arbitrary field of characteristic zero. We will prove \Cref{thm:main-intro-2} by giving a spanning set of size $|\Str(F)|$ for the $|F|$-dimensional vector space $\FF[F]$ of $\FF$-valued functions on $F$. \Cref{thm:main-intro-2} then follows, since the cardinality of any spanning is at least the dimension. The spanning set we construct will consist of functions of the form given by the following definition:

\begin{definition} \label{def:charf}
For $x \in \lattice$, the function $\charf{x}\colon \lattice \to \FF$ is given by
\[
 \charf{x}(y) = \ind{y \geq x},
\]
that is, $\charf{x}(y) = 1$ if $y \geq x$, and otherwise $\charf{x}(y) = 0$.

For a set $G \subseteq \lattice$,
\[
 \Charf{G} = \{ \charf{x} : x \in G \}.
\]
\end{definition}

In the case of the Boolean lattice, we can think of the elements of the lattice as encoded by sets $S \subseteq \{1,\ldots,n\}$ as well as by Boolean variables $x_1,\ldots,x_n$. The reader can verify that
\[
 \charf{S} = \prod_{i \in S} x_i.
\]
\Cref{def:charf} extends this idea to general posets.

We will show that $\FF[F]$ is spanned by $\Charf{\Str(F)}$. The first step is showing that $\Charf{\lattice}$ is a basis for~$\FF[\lattice]$, which for the Boolean lattice just states that every function on $\{0,1\}^n$ can be expressed uniquely as a multilinear polynomial:

\begin{lemma} \label{lem:charf-basis}
The set $\Charf{\lattice}$ is a basis for $\FF[\lattice]$.
\end{lemma}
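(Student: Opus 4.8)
The plan is to show that $\Charf{\lattice} = \{\charf{x} : x \in \lattice\}$ is a basis for the vector space $\FF[\lattice]$ of all $\FF$-valued functions on $\lattice$. Since $\lvert \Charf{\lattice}\rvert = \lvert \lattice\rvert = \dim \FF[\lattice]$ (the functions $\charf{x}$ are distinct because $\charf{x}(x) = 1$ but $\charf{x}(y)$ may differ at $y = x$ across different $x$), it suffices to prove that $\Charf{\lattice}$ spans $\FF[\lattice]$; linear independence then follows automatically from the dimension count, or can be shown directly. First I would observe that $\charf{x}$ is, up to the ordering of $\lattice$ by $\leq$, a ``triangular'' family: $\charf{x}(y) = \ind{y \geq x}$, so $\charf{x}(x) = 1$ and $\charf{x}(y) = 0$ unless $y \geq x$.

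\medskip

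\noindent\textbf{Approach via M\"obius inversion.} The cleanest route is to exhibit the standard delta-function basis $\{\delta_x : x \in \lattice\}$ (where $\delta_x(y) = \ind{y = x}$) explicitly as $\FF$-linear combinations of the $\charf{x}$, since the $\delta_x$ manifestly span $\FF[\lattice]$. I claim that
\[
 \delta_x = \sum_{y \geq x} \mu(x,y)\, \charf{y}.
\]
To verify this, I would evaluate the right-hand side at an arbitrary $w \in \lattice$:
\[
 \sum_{y \geq x} \mu(x,y)\, \charf{y}(w) = \sum_{y \geq x} \mu(x,y)\, \ind{w \geq y} = \sum_{x \leq y \leq w} \mu(x,y),
\]
where the last sum ranges over those $y$ with $x \leq y$ and $y \leq w$ (which forces $x \leq w$, else the sum is empty). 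By the defining property of the M\"obius function, $\sum_{x \leq y \leq w} \mu(x,y)$ equals $\ind{x = w}$, which is exactly $\delta_x(w)$. Hence each $\delta_x$ lies in the span of $\Charf{\lattice}$, so $\Charf{\lattice}$ spans $\FF[\lattice]$.

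\medskip

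\noindent\textbf{Completing the argument.} Since $\Charf{\lattice}$ spans an $\lvert\lattice\rvert$-dimensional space and has exactly $\lvert\lattice\rvert$ elements, it is a basis. Alternatively, one can phrase the whole computation as an application of \Cref{lem:mobius-inversion}: the relation $\charf{y} = \sum_{w \geq y} \delta_w$ is precisely the hypothesis of the first M\"obius inversion formula (with $f = \charf{\,\cdot\,}$ and $g = \delta$ read off at each point), and inverting it recovers the displayed formula for $\delta_x$. I do not expect any genuine obstacle here: the only point requiring care is keeping the direction of the order consistent between the definition $\charf{x}(y) = \ind{y \geq x}$ and the version of M\"obius inversion used, so that the identity $\sum_{x \leq y \leq w}\mu(x,y) = \ind{x=w}$ is applied with the correct endpoints. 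Everything else is a direct evaluation, and the fact that $\FF$ has characteristic zero is not even needed for this lemma — the M\"obius function takes integer values and the identity holds over any field.
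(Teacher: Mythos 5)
Your proof is correct, but it goes in the opposite direction from the paper's. The paper proves \emph{linear independence} directly: it orders the elements of $\lattice$ in a linear extension $x_1,\ldots,x_N$ and, given a dependency $\sum_x c_x\charf{x}=0$, shows $c_{x_i}=0$ by induction on $i$, evaluating the dependency at $x_i$ and using the triangularity $\charf{x_j}(x_i)=\ind{x_j\le x_i}$; the dimension count $|\Charf{\lattice}|=|\lattice|=\dim\FF[\lattice]$ then upgrades independence to a basis. You instead prove \emph{spanning}, by exhibiting the delta functions explicitly as $\delta_x=\sum_{y\ge x}\mu(x,y)\charf{y}$ and verifying this pointwise via the defining recursion $\sum_{x\le y\le w}\mu(x,y)=\ind{x=w}$; the same dimension count then closes the argument (note a spanning set of size at most the dimension forces $x\mapsto\charf{x}$ to be injective, so your slightly garbled aside about distinctness is not actually needed). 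Your route buys an explicit inverse change of basis --- essentially the computation that reappears in the paper's Lemma~\ref{lem:elimination} --- at the cost of invoking M\"obius inversion; the paper's route is more elementary, using only the partial order and no M\"obius function at all. You are also right that characteristic zero is irrelevant here: both arguments work over any field, since the relevant matrix is unitriangular with integer entries.
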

\begin{proof}
Since $|\Charf{\lattice}| = |\lattice| = \dim \FF[\lattice]$, it suffices to show that $\Charf{\lattice}$ is linearly independent. Consider any linear dependency of the form $\ell := \sum_x c_x \charf{x} = 0$. We will show that $c_x = 0$ for all $x \in \lattice$, and so~$\Charf{\lattice}$ is linearly independent.

Arrange the elements of $\lattice$ in an order $x_1,\ldots,x_N$ such that $x_i < x_j$ implies $i < j$. We prove that~$c_{x_i} = 0$ by induction on $i$. Suppose that $c_{x_j} = 0$ for all $j < i$. Then in particular, $c_{x_j}=0$ for all~$x_j < x_i$, and therefore
\[
 0 = \ell(x_i) = \sum_j c_{x_j} \charf{x_j}(x_i) = \sum_{j\colon x_j \leq x_i} c_{x_j} = c_{x_i}. \qedhere
\]
\end{proof}

The crucial step of the proof of \Cref{thm:main-intro-2} is an application of (generalized) inclusion-exclusion, which shows that if $F$ does not shatter $z$ then $\charf{z}|_F$ can be expressed as a linear combination of $\charf{w}|_F$ for~$w < z$. In the case of the Boolean lattice, the argument is as follows. Suppose that $F$ does not shatter $S$, say $A \cap S \neq T$ for all $A \in F$. Then all elements of $F$ satisfy
\[
 \prod_{i \in T} x_i \prod_{j \in S \setminus T} (1 - x_j) = 0,
\]
which implies that over $F$,
\[
 \prod_{i \in S} x_i = \sum_{R \subsetneq S \setminus T} (-1)^{|S \setminus (T \cup R)|+1} \prod_{j \in T \cup R} x_j.
\]
The argument for general posets is very similar, and uses M\"obius inversion:

\begin{lemma} \label{lem:elimination}
Suppose that $z \notin \Str(F)$. There exist coefficients $\gamma_y$ such that for all $p \in F$,
\[
 \charf{z}(p) = \sum_{y < z} \gamma_y \charf{y}(p).
\]
\end{lemma}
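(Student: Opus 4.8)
The plan is to realize the vanishing of an inclusion--exclusion product on $F$ through M\"obius inversion, mirroring the Boolean computation sketched just before the statement. Since $z\notin\Str(F)$, \Cref{def:shattering} supplies a witness $x^*\leq z$ to the failure of shattering: no $p\in F$ satisfies $p\meet z=x^*$. Equivalently, the indicator $\ind{p\meet z=x^*}$ vanishes identically on $F$, which is the lattice analogue of the product $\prod_{i\in T}x_i\prod_{j\in S\setminus T}(1-x_j)$ being zero over $F$. The goal is then to expand this indicator in the functions $\charf{w}$ and read off the coefficient of $\charf{z}$.

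First I would record the elementary but crucial identity that for every $w\leq z$ and every $p\in\lattice$,
\[
 \charf{w}(p)=\ind{p\geq w}=\ind{p\meet z\geq w}.
\]
The forward direction holds because $w\leq p$ and $w\leq z$ force $w\leq p\meet z$, the meet being the greatest lower bound; the reverse direction is immediate from $p\meet z\leq p$. Consequently, writing $q:=p\meet z\leq z$, each value $\charf{w}(p)$ with $w\leq z$ depends on $p$ only through $q$, via $\ind{q\geq w}$.

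Next I would apply M\"obius inversion on the interval $[0,z]$. Treating $q$ as a fixed parameter, the functions $g(w)=\ind{q=w}$ and $f(y)=\ind{q\geq y}$ on $[0,z]$ satisfy $f(y)=\sum_{w\geq y}g(w)$ (the unique nonzero term is $w=q$), so the first formula of \Cref{lem:mobius-inversion} yields
\[
 \ind{q=x^*}=\sum_{x^*\leq w\leq z}\mu(x^*,w)\,\ind{q\geq w},
\]
where the M\"obius coefficients are those of $\lattice$ since $\mu$ depends only on the intervals $[x^*,w]\subseteq[0,z]$. Substituting the identity of the previous step rewrites the right-hand side as $\sum_{x^*\leq w\leq z}\mu(x^*,w)\charf{w}(p)$, an equality valid for all $p$. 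Evaluating at $p\in F$ makes the left-hand side zero, and isolating the top term $w=z$ gives
\[
 \mu(x^*,z)\,\charf{z}(p)=-\sum_{x^*\leq w<z}\mu(x^*,w)\,\charf{w}(p)\qquad(p\in F).
\]

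The one place where a hypothesis enters --- and the step I expect to be the crux --- is dividing through by $\mu(x^*,z)$, which is legitimate precisely because $\lattice$ has nonvanishing M\"obius function, the standing assumption of \Cref{thm:main-intro-2}. Setting $\gamma_y=-\mu(x^*,y)/\mu(x^*,z)$ for $x^*\leq y<z$ and $\gamma_y=0$ otherwise then yields $\charf{z}(p)=\sum_{y<z}\gamma_y\charf{y}(p)$ for all $p\in F$, as required. The only remaining routine point is to confirm that performing the inversion inside $[0,z]$ rather than all of $\lattice$ changes nothing, which holds because both $\ind{q=x^*}$ and each $\ind{q\geq w}$ only ever involve elements $\leq z$.
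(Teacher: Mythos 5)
Your proposal is correct and follows essentially the same route as the paper: the same functions $g_p(x)=\ind{p\meet z=x}$ and $f_p(y)=\ind{p\meet z\geq y}$, the same M\"obius inversion restricted to $[0,z]$, the same use of the non-shattering witness to kill $g_p(x^*)$ on $F$, and the same division by $\mu(x^*,z)$ justified by the nonvanishing M\"obius function. The only differences are cosmetic (naming the witness $x^*$ and parametrizing by $q=p\meet z$).
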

\begin{proof}
For an element $p \in F$, define the following two functions:
\begin{align*}
f_p(x) &= \ind{x\leq p \meet z}, &
g_p(y) &= \ind{y = p \meet z}.
\end{align*}
Clearly $f_p(x) = \sum_{y \geq x} g_p(y)$, and so \Cref{lem:mobius-inversion} shows that $g_p(x) = \sum_{y \geq x} \mu(x,y) f_p(y)$. Since $f_p(y) = 0$ unless $y \leq z$, we can restrict the sum to the range $x \leq y \leq z$. When $y \leq z$, the condition $y \leq p \meet z$ is equivalent to the condition $y \leq p$, and so we conclude that
\[
 g_p(x) = \sum_{x \leq y \leq z} \mu(x,y) f_p(y) = \sum_{x \leq y \leq z} \mu(x,y) \charf{y}(p).
\]

Since $z \notin \Str(F)$, there exists an element $x \leq z$ such that $p \land z \neq x$ for all $p \in F$. In other words,~$g_p(x) = 0$ for all $p \in F$. Therefore all $p \in F$ satisfy
\[
 \charf{z}(p) = -\sum_{x \leq y < z} \frac{\mu(x,y)}{\mu(x,z)} \charf{y}(p),
\]
using the nonvanishing of the M\"obius function.
\end{proof}

We can now complete the proof, employing exactly the same argument used for the Boolean lattice.

\newtheorem*{thm:main-intro-2}{Theorem \ref{thm:main-intro-2}}
\begin{thm:main-intro-2}
	If $L$ is a lattice in which $\mu(x,y) \neq 0$ for all $x \leq y$ then every family $F \subseteq L$ shatters at least $|F|$ elements.
\end{thm:main-intro-2}

\begin{proof}
\Cref{lem:charf-basis} shows that $\Charf{\lattice}$ is a basis for $\FF[\lattice]$, and so the functions $\charf{x}$, restricted to the domain $F$, span $\FF[F]$. We will show that every function in $\FF[F]$ can be expressed as a linear combination of functions in $\Charf{\Str(F)}$.

Consider any function $f \in \FF[F]$. Since $\Charf{\lattice}$ spans $\FF[f]$, there exist coefficients $c_x$ such that $f = \sum_x c_x \charf{x}$. Define the potential function
\[
 \Phi(\vec{c}) = \sum_{\substack{x \notin \Str(F)\colon \\ c_x \neq 0}} N^{r(x)},
\]
where $N = |\lattice| + 1$, and choose a representation which minimizes $\Phi(\vec{c})$. If $\Phi(\vec{c}) > 0$ then choose $z \notin \Str(F)$ satisfying $c_z \neq 0$ of maximal rank. \Cref{lem:elimination} shows that
\[
 f = \sum_{x \neq z} c_x \charf{x} + \sum_{y < z} \gamma_y c_z \charf{y}.
\]
The corresponding coefficient vector $\vec{d}$ satisfies $\Phi(\vec{d}) < \Phi(\vec{c})$, contradicting the choice of $\vec{c}$. We conclude that $\Phi(\vec{c}) = 0$, and so $f$ is a linear combination of functions in $\Charf{\Str(F)}$.

Concluding, we have shown that $\Charf{\Str(F)}$ spans $\FF[F]$. Hence $|\Str(F)| = |\Charf{\Str(F)}| \geq \dim \FF[F]=|F|$.
\end{proof}

\subsection{Some corollaries}

\Cref{thm:main-intro-2} immediately implies \Cref{thm:main-intro}.

\newtheorem*{thm:main-intro}{Theorem \ref{thm:main-intro}}
\begin{thm:main-intro}
	Let $L$ be a ranked lattice of rank $r$ with nonvanishing M\"obius function, i.e.\ $\mu(x,y) \neq 0$ for all $x \leq y$.
	Then for all $0 \leq d \leq r$, any family $F \subseteq L$ of VC~dimension $d$
	contains at most $\sqbinom{L}{\leq \VC(F)}=\sqbinom{L}{0} + \cdots + \sqbinom{L}{d}$ elements.
	Furthermore, for every $d \leq r(L)$ the inequality is tight for some $F \subseteq L$ of VC dimension $d$. 
\end{thm:main-intro}

\begin{proof}
Suppose that $\VC(F) = d$. If $|F| > \sqbinom{L}{\leq d}$ then, according to \Cref{thm:main-intro-2}, also $|\Str(F)| > \sqbinom{L}{\leq d}$. However, this implies that $\Str(F)$ must contain a set of rank larger than $d$, contradicting the assumption~$\VC(F) = d$. This proves the inequality.

To show that the inequality is tight for all $d \leq r(L)$, consider the set $F_d = \{ x : r(x) \leq d \}$. This is a set containing $\sqbinom{L}{\leq d}$ elements which shatters all elements of rank $d$ but no element of rank $d+1$, and so satisfies $\VC(F_d) = d$.
\end{proof}

We can generalize \Cref{thm:main-intro} to arbitrary antichains to obtain a further corollary.

\begin{corollary} \label{cor:main-antichain}
Let $\lattice$ be a ranked lattice with nonvanishing M\"obius function and let $A \subseteq \lattice$ be a maximal antichain.
If $F \subseteq \lattice$ does not shatter any element of $A$ then
\[
 |F| \leq |F_A|, \text{ where } F_A = \{ x \in \lattice : x < y \text{ for some } y \in A \}.
\]
Furthermore, $F_A$ does not shatter any element of $A$.
\end{corollary}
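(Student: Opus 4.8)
The plan is to deduce the inequality from \Cref{thm:main-intro-2} by showing that $\Str(F) \subseteq F_A$, and then to establish the ``furthermore'' clause by a direct shattering argument.

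First I would use maximality of the antichain $A$ to decompose the lattice. By maximality, every element $z \in \lattice \setminus A$ is comparable to some element of $A$, so $\lattice$ is the (disjoint) union of $A$, the down-set $F_A = \{z : z < y \text{ for some } y \in A\}$, and the up-set $F_A^{\uparrow} = \{z : z > y \text{ for some } y \in A\}$; disjointness of $F_A$ and $F_A^{\uparrow}$ holds because $A$ is an antichain. Next I would locate $\Str(F)$ inside this decomposition. By hypothesis $\Str(F)$ is disjoint from $A$. Moreover, by \Cref{lem:shattering-hereditary} the set $\Str(F)$ is downward closed, so if some $z > y$ with $y \in A$ were shattered then $y$ would be shattered as well, contradicting the hypothesis; hence $\Str(F)$ is also disjoint from $F_A^{\uparrow}$. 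Therefore $\Str(F) \subseteq F_A$, and combining this with \Cref{thm:main-intro-2} yields $|F| \leq |\Str(F)| \leq |F_A|$.

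For the furthermore clause I would show directly that $F_A$ does not shatter any $y \in A$, taking $t = y$ as the witness in \Cref{def:shattering}. If some $x \in F_A$ satisfied $x \meet y = y$, i.e.\ $x \geq y$, then from $x < y'$ for some $y' \in A$ we would obtain $y \leq x < y'$, forcing $y < y'$ with both elements lying in the antichain $A$ --- a contradiction. Thus no element of $F_A$ has meet $y$ with $y$, so $F_A$ fails to shatter $y$.

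The argument is short because the heavy lifting is already done by \Cref{thm:main-intro-2}; the only genuine content is the reduction $\Str(F) \subseteq F_A$. I expect the one real subtlety to be the correct use of \emph{maximality} of the antichain: it is precisely what guarantees that every shattered element lands in the down-set $F_A$, rather than escaping into a part of the lattice incomparable to all of $A$. Note that, beyond the nonvanishing M\"obius hypothesis inherited from \Cref{thm:main-intro-2}, the argument makes no further use of the ranked structure.
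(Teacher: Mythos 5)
Your proposal is correct and follows essentially the same route as the paper: both reduce the inequality to \Cref{thm:main-intro-2} by using maximality of $A$ together with the downward-closedness of $\Str(F)$ (\Cref{lem:shattering-hereditary}) to force $\Str(F) \subseteq F_A$ (the paper phrases this contrapositively), and the ``furthermore'' argument is identical. No gaps.
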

\Cref{thm:main-intro} is the special case of \Cref{cor:main-antichain} in which $A$ consists of all elements of rank $\VC(F)+1$.
\begin{proof}[Proof of \Cref{cor:main-antichain}]
Let us start by showing that $|F| \leq |F_A|$. If $|F| > |F_A|$ then, according to \Cref{thm:main-intro-2}, also $|\Str(F)| > |F_A|$. Therefore $F$ shatters some element $x$ such that $x \not< y$ for all $y \in A$. Since $A$ is a maximal antichain, either $x \in A$ or $x \geq y$ for some $y \in A$. In both cases $F$ shatters some element in $A$ (in the second case, according to \Cref{lem:shattering-hereditary}).

Next, let us show that $F_A$ does not shatter any element of $A$. Suppose that $F_A$ shatters some element~$a \in A$. Then some $x \in F_A$ satisfies $x \meet a = a$, that is, $x \geq a$. Since $x \in F_A$, we know that $x < y$ for some~$y \in A$. Put together, this implies that $a < y$, contradicting the fact that $A$ is an antichain.
\end{proof}

\medskip

A final corollary is a \emph{dichotomy theorem}, a direct consequence of the Sauer--Shelah--Perles lemma which is the source of many of its applications. Before describing our generalized dichotomy theorem, let us briefly describe the classical one. Let $F \subseteq \{0,1\}^X$, where $X$ is infinite. For every finite $I \subseteq X$, we can consider the projection of $F$ to the coordinates of $I$, denoted $F|_I$. 
The \emph{growth function} of $F$ is
\[
 \Pi_F(n) = \max_{\substack{I \subset X \\ |I| = n}} \bigl| F|_I \bigr|.
\]
The Sauer--Shelah--Perles lemma immediately implies the following polynomial versus exponential dichotomy for the growth function:
\begin{itemize}
\item Either $\VC(F) = \infty$, in which case $\Pi_F(n) = 2^n$;
\item or $\VC(F) = d < \infty$, in which case $\Pi_F(n) \leq \binom{n}{\leq d} \leq 2n^d$.
\end{itemize}
For example, it implies that there is no $F$
for which $\pi_F(n) = \Theta(2^{\log^2 n})$.

We can extend this result to vector spaces (we leave extensions to more general domains to the reader). Let $\FF_q$ be a finite field, let $X$ be an infinite set, let $\mathcal{V}$ denote the linear space of all functions~$v\colon X\to\FF_q$ with a finite support (i.e.\ $v(x)=0$ for all but finitely many $x\in X$), and let $\mathcal{L}$ denote the (infinite) lattice of all finite dimensional subspaces of $\mathcal{V}$. Let $F\subseteq \mathcal{L}$ be a family of subspaces. For every $I\in\mathcal{L}$, we can consider the projection $F|_I = \{ V \cap I : V \in F \}$. The growth function of $F$ is defined as in the classical case, with dimension replacing cardinality:
\[
 \Pi_F(n) = \max_{\substack{I \in\mathcal{L} \\ \dim(I) = n}} \bigl| F|_I \bigr|.
\]

\Cref{thm:main-intro} immediately implies a dichotomy as in the classical case. In order to understand the resulting orders of growth, we need to be able to estimate $\sqbinom{\lattice}{d}$ for subspace lattices~$\lattice$.

\begin{lemma} \label{lem:subspace-size}
Let $\lattice = \FF_q^n$. For all $d \leq n$,
\[
 q^{d(n-d)} \leq \sqbinom{\lattice}{\leq d} \leq 2n^d q^{dn}.
\]
In particular, $|\lattice| \geq q^{(n^2-1)/4}$.
\end{lemma}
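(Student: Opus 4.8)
The plan is to recognize $\sqbinom{\lattice}{d}$ as the Gaussian binomial coefficient $\prod_{i=0}^{d-1}\frac{q^{n-i}-1}{q^{d-i}-1}$, the number of $d$-dimensional subspaces of $\FF_q^n$, and then bound the partial sum $\sqbinom{\lattice}{\leq d}=\sum_{k=0}^d\sqbinom{\lattice}{k}$ on both sides by elementary manipulations. For the lower bound I would simply discard all but the top term, so that $\sqbinom{\lattice}{\leq d}\geq\sqbinom{\lattice}{d}$, and then show $\sqbinom{\lattice}{d}\geq q^{d(n-d)}$. The latter follows factor by factor from the inequality $\frac{q^a-1}{q^b-1}\geq q^{a-b}$, valid for $a\geq b\geq 1$ (clearing the denominator, it is equivalent to $q^{a-b}\geq 1$). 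Applying this to each of the $d$ factors, each of which has $a-b=(n-i)-(d-i)=n-d$, yields exactly $q^{d(n-d)}$.

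For the upper bound, the cleanest route I see is a surjective-counting argument: every subspace of dimension at most $d$ is the span of some $d$-tuple of vectors of $\FF_q^n$ (take any basis and pad with zero vectors), and there are $(q^n)^d=q^{dn}$ such tuples, so $\sqbinom{\lattice}{\leq d}\leq q^{dn}\leq 2n^d q^{dn}$. If one prefers to stay with the Gaussian binomials, the same $q^{dn}$ bound follows from $\sqbinom{\lattice}{k}\leq\prod_{i=0}^{k-1}(q^n-q^i)\leq q^{nk}$ (since the number of ordered bases in the denominator is at least $1$), after summing the $d+1$ terms and using the trivial numerical fact $d+1\leq 2n^d$. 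Either way, the factor $2n^d$ is comfortable slack.

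Finally, for the ``in particular'' claim I would observe that $|\lattice|=\sqbinom{\lattice}{\leq n}\geq\sqbinom{\lattice}{d}\geq q^{d(n-d)}$ holds for every $d\leq n$, and optimize the exponent by taking $d=\lfloor n/2\rfloor$, which gives $d(n-d)=\lfloor n/2\rfloor\lceil n/2\rceil\geq(n^2-1)/4$ (with equality when $n$ is odd, and $n^2/4$ when $n$ is even). I expect no genuine obstacle here: the whole statement is a pair of standard two-sided estimates for Gaussian binomial coefficients. The only points demanding a little care are the per-factor inequality underlying the lower bound and, in the alternative route for the upper bound, the elementary check $d+1\leq 2n^d$; the surjection argument sidesteps the latter entirely.
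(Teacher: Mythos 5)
Your proposal is correct, and both bounds are obtained by a genuinely different route from the paper's. For the lower bound the paper uses the subset-sum expansion $\sqbinom{n}{d}_q = \sum_{|A|=d} q^{\sum_{i\in A} i - d(d+1)/2}$ and isolates the summand of maximal exponent $d(n-d)$; you instead use the product formula and the per-factor inequality $\frac{q^a-1}{q^b-1}\ge q^{a-b}$, which lands on the same estimate $\sqbinom{n}{d}_q \ge q^{d(n-d)}$ with less machinery. For the upper bound the paper bounds each rank level by $\binom{n}{d}q^{d(n-d)} \le n^d q^{dn}$ and then sums a geometric-type series using $nq^n\ge 2$ to absorb the factor $2$; your surjection from $d$-tuples of vectors onto subspaces of dimension at most $d$ gives the cleaner and strictly stronger bound $\sqbinom{\lattice}{\le d}\le q^{dn}$ in one step, making the slack factor $2n^d$ entirely cosmetic. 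The paper's route has the side benefit of exhibiting the two-sided estimate $q^{d(n-d)}\le\sqbinom{n}{d}_q\le\binom{n}{d}q^{d(n-d)}$ for a single Gaussian binomial, but for the stated lemma your argument is shorter and avoids the series manipulation. The final optimization at $d=\lfloor n/2\rfloor$ is identical in both, and your check that $\lfloor n/2\rfloor\lceil n/2\rceil\ge(n^2-1)/4$ is exactly what the paper leaves implicit.
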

\begin{proof}
The number of elements of $\lattice$ of rank $d$ is the $q$-binomial coefficient $\sqbinom{n}{d}_q$. There are many formulas for $\sqbinom{n}{d}_q$. The one we use is
\[
 \sqbinom{n}{d}_q = \sum_{\substack{A \subseteq \{1,\ldots,n\} \\ |A|=d}} q^{\sum_{i \in A} i - d(d+1)/2}.
\]
Calculation shows that the summand with highest exponent, corresponding to $A = \{n-d+1,\ldots,n\}$, has exponent $d(n-d)$. Therefore
\[
 q^{d(n-d)} \leq \sqbinom{n}{d}_q \leq \binom{n}{d} q^{d(n-d)} \leq n^d q^{dn}.
\]
This implies that
\[
 \sqbinom{\lattice}{\leq d} \leq \sum_{e=0}^d n^e q^{en} \leq n^d q^{dn} \sum_{e=0}^d (nq^n)^{-e}.
\]
We can assume that $n \geq 1$, and so $nq^n \geq 2$, implying that $\sum_{e=0}^\infty (nq^n)^{-e} \leq 2$. This proves the main inequalities. The lower bound on $|\lattice|$ follows from taking $d = \lfloor n/2 \rfloor$.
\end{proof}

Combining the lemma with \Cref{thm:main-intro} specialized to the subspace lattice, we immediately obtain the following dichotomy theorem:

\begin{theorem} \label{thm:dichotomy-vs}
For every family $F \subseteq \mathcal{L}$, exactly one of the following holds:
\begin{itemize}
\item Either $\VC(F) = \infty$, in which case $\Pi_F(n) \geq q^{(n^2-1)/4}$;
\item or $\VC(F) = d < \infty$, in which case $\Pi_F(n) \leq 2n^dq^{dn}$.
\end{itemize}
\end{theorem}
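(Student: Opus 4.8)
The plan is to reduce the infinite ambient lattice $\mathcal{L}$ to the finite subspace lattices $\FF_q^n$ by projecting $F$ onto finite-dimensional subspaces, and then invoke \Cref{thm:main-intro} together with the two size estimates of \Cref{lem:subspace-size}. Since every family $F$ satisfies either $\VC(F)=\infty$ or $\VC(F)=d$ for some finite $d$, and these alternatives are mutually exclusive and exhaustive, the "exactly one" clause is automatic, and it remains to establish the stated bound in each case.

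The first step I would carry out is to relate the shattering behaviour of $F$ to that of its projections. For any $I\in\mathcal{L}$, any $V\in F$, and any subspace $s\leq I$, I would observe that $(V\cap I)\meet s = V\cap(I\cap s) = V\meet s$, so $F$ shatters $s$ if and only if $F|_I$ shatters $s$. Because every element shattered by $F|_I$ lies below $I$, this compatibility gives $\VC(F|_I)\leq\VC(F)$ for every $I$. Moreover $F|_I$ is a family inside the finite geometric lattice $\FF_q^{\dim I}$, whose M\"obius function is nonvanishing by Weisner's theorem (\Cref{thm:weisner}), so \Cref{thm:main-intro} applies to each projection.

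For the finite branch, assume $\VC(F)=d<\infty$ and fix any $I$ with $\dim(I)=n$. Then $\VC(F|_I)\leq d$, so \Cref{thm:main-intro} applied to $\FF_q^n$ yields $|F|_I|\leq\sqbinom{\FF_q^n}{\leq\VC(F|_I)}\leq\sqbinom{\FF_q^n}{\leq d}$, using monotonicity of $\sqbinom{\cdot}{\leq k}$ in $k$. The upper estimate of \Cref{lem:subspace-size} then gives $|F|_I|\leq 2n^d q^{dn}$, and taking the maximum over all such $I$ produces $\Pi_F(n)\leq 2n^d q^{dn}$. For the infinite branch, assume $\VC(F)=\infty$; then $F$ shatters elements of arbitrarily large rank, and since $\Str(F)$ is downward closed (\Cref{lem:shattering-hereditary}) and $\mathcal{L}$ is ranked, $F$ shatters some subspace $s$ with $\dim(s)=n$. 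Taking $I=s$, the compatibility above shows $F|_I$ shatters $I$ itself, which forces every subspace $t\leq I$ to arise as $V\cap I$ for some $V\in F$; hence $F|_I$ realizes all subspaces of $I$ and $|F|_I|=|\FF_q^n|$. The lower estimate of \Cref{lem:subspace-size} then gives $\Pi_F(n)\geq|\FF_q^n|\geq q^{(n^2-1)/4}$.

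The argument is essentially bookkeeping, and the only point I expect to require genuine care is the passage between the infinite lattice $\mathcal{L}$ and its finite projections, namely verifying that shattering of a subspace $s$ by $F$ is unaffected by intersecting the members of $F$ with any $I\geq s$, and that this identity transfers cleanly to the VC dimensions. Once this projection compatibility is pinned down, both halves of the dichotomy drop out immediately from \Cref{thm:main-intro} and the two bounds in \Cref{lem:subspace-size}.
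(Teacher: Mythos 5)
Your proposal is correct and is exactly the argument the paper intends: the paper states that the theorem follows "immediately" from combining \Cref{lem:subspace-size} with \Cref{thm:main-intro} specialized to subspace lattices, and you have correctly supplied the omitted bookkeeping, in particular the key compatibility $(V\cap I)\meet s = V\meet s$ for $s\leq I$, which shows that shattering and hence VC dimension behave well under projection. Both branches of the dichotomy then follow just as you describe.
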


\section{Partial results towards the SSP=RC conjecture}


\subsection{Proof of Theorem~\ref{thm:notRCnotSSP}}

We start by showing that an SSP lattice must be RC.

\newtheorem*{thm:notRCnotSSP}{Theorem \ref{thm:notRCnotSSP}}
\begin{thm:notRCnotSSP}
	Let $L$ be a lattice which is not relatively complemented.
	Then there exists a family $F \subset L$ shattering strictly fewer than $\lvert F \rvert$ elements.
\end{thm:notRCnotSSP}
\begin{proof}
	Since $L$ is not relatively complemented, there exist two elements $x,y$ in $L$ such that there is a unique element $z$ satisfying $x < z < y$.
	Let $F= \{ w \mid w \le y\} \backslash \{x\}$.
	It is easy to check that $\Str(F) \subseteq \{w \mid w \le y\}$. On the other hand, $y,z$ are not shattered since $x \notin F$. Therefore
	 $\lvert \Str (F) \rvert \le \lvert F \rvert-1$.
\end{proof}

Figure~\ref{fig:nonRClattices} shows two examples of ranked lattices that are not relatively complemented, only one of the two satisfying the conclusion of Theorem~\ref{thm:main-intro}.

The first example $L_1$ is the path lattice $0 < 1 < 2$, appearing in Figure~\ref{fig:counterexample-path}. The family $F=\{1,2\}$ shatters only $0$ and hence has VC dimension $0$, while $\lvert F \rvert > \sqbinom{L_1}{0}$.

The second example $L_2$, appearing in Figure~\ref{fig:counterexample-b}, is more complex. One can check that $\lvert \Str (F) \rvert \ge \lvert F \rvert$ unless $F = L_2 \setminus 4,L_2 \setminus 5$. Both of these families shatter $12$, and so have VC~dimension~$2$. Since $\sqbinom{L_2}{\leq 2} = |L_2|-1$, the lattice satisfies the conclusion of Theorem~\ref{thm:main-intro}.

The lattice $L_2$ thus separates the SSP condition from the weaker condition given by Theorem~\ref{thm:main-intro}. It seems hard to characterize the lattices that are ``weakly SSP'', that is, satisfy $|\Str(F)| \leq \binom{L}{\leq \VC(F)}$ for all $F \subseteq L$.


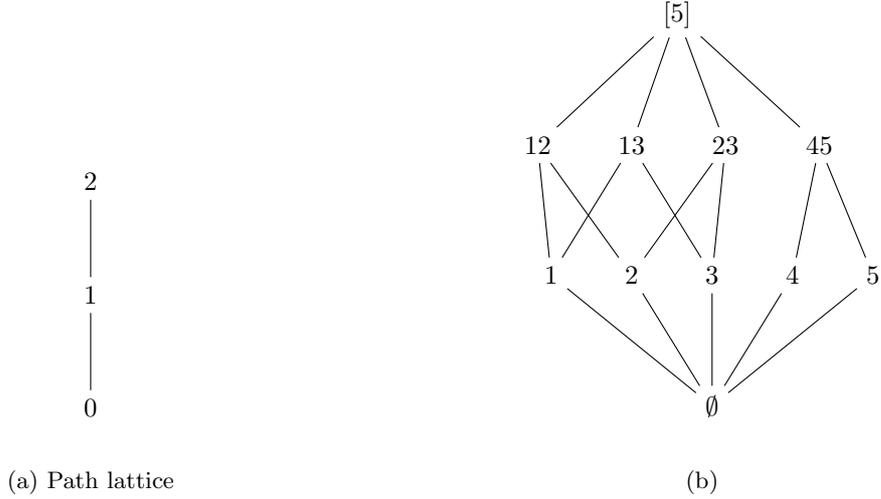
\begin{figure}[h]
\begin{minipage}[b]{.5\linewidth}
\begin{center}
\begin{tikzpicture}
\node (zero) at (0,0) {$0$};
\node (one) at (0,1.5) {$1$};
\node (two) at (0,3) {$2$};
\draw (zero) -- (one) -- (two);
\end{tikzpicture}
\end{center}
\subcaption{Path lattice} \label{fig:counterexample-path}
\end{minipage}
\begin{minipage}[b]{.5\linewidth}
\begin{center}
	\begin{tikzpicture}[>=latex,line join=bevel,]
	\node (node_9) at (113.5bp,104.5bp) [draw,draw=none] {$45$};
	\node (node_8) at (78.5bp,104.5bp) [draw,draw=none] {$23$};
	\node (node_7) at (43.5bp,104.5bp) [draw,draw=none] {$13$};
	\node (node_6) at (8.5bp,104.5bp) [draw,draw=none] {$12$};
	\node (node_4) at (133.5bp,55.5bp) [draw,draw=none] {$5$};
	\node (node_5) at (103.5bp,55.5bp) [draw,draw=none] {$4$};
	\node (node_3) at (73.5bp,55.5bp) [draw,draw=none] {$3$};
	\node (node_2) at (43.5bp,55.5bp) [draw,draw=none] {$2$};
	\node (node_1) at (13.5bp,55.5bp) [draw,draw=none] {$1$};
	\node (node_0) at (73.5bp,6.5bp) [draw,draw=none] {$\emptyset$};
	\node (node_10) at (60.5bp,153.5bp) [draw,draw=none] {$[5]$};
	\draw  (node_0) ..controls (73.5bp,19.822bp) and (73.5bp,29.898bp)  .. (node_3);
	\draw  (node_0) ..controls (65.119bp,20.188bp) and (58.475bp,31.042bp)  .. (node_2);
	\draw  (node_8) ..controls (73.552bp,117.97bp) and (69.738bp,128.35bp)  .. (node_10);
	\draw  (node_6) ..controls (23.492bp,118.63bp) and (36.032bp,130.44bp)  .. (node_10);
	\draw  (node_3) ..controls (74.859bp,68.822bp) and (75.888bp,78.898bp)  .. (node_8);
	\draw  (node_5) ..controls (106.23bp,68.896bp) and (108.32bp,79.125bp)  .. (node_9);
	\draw  (node_0) ..controls (81.881bp,20.188bp) and (88.525bp,31.042bp)  .. (node_5);
	\draw  (node_1) ..controls (21.881bp,69.188bp) and (28.525bp,80.042bp)  .. (node_7);
	\draw  (node_0) ..controls (89.262bp,19.373bp) and (106.84bp,33.731bp)  .. (node_4);
	\draw  (node_2) ..controls (53.382bp,69.334bp) and (61.361bp,80.505bp)  .. (node_8);
	\draw  (node_7) ..controls (48.173bp,117.97bp) and (51.775bp,128.35bp)  .. (node_10);
	\draw  (node_2) ..controls (33.618bp,69.334bp) and (25.639bp,80.505bp)  .. (node_6);
	\draw  (node_0) ..controls (57.738bp,19.373bp) and (40.156bp,33.731bp)  .. (node_1);
	\draw  (node_9) ..controls (98.22bp,118.63bp) and (85.439bp,130.44bp)  .. (node_10);
	\draw  (node_1) ..controls (12.141bp,68.822bp) and (11.112bp,78.898bp)  .. (node_6);
	\draw  (node_4) ..controls (127.97bp,69.042bp) and (123.67bp,79.582bp)  .. (node_9);
	\draw  (node_3) ..controls (65.119bp,69.188bp) and (58.475bp,80.042bp)  .. (node_7);
	\end{tikzpicture}
\end{center}
\subcaption{} \label{fig:counterexample-b}
\end{minipage}
\caption{Lattices that are not RC}
\label{fig:nonRClattices}
\end{figure}

\subsection{Proof of Theorem~\ref{thm:mu_vanishing_once}}

In this subsection we prove Theorem~\ref{thm:mu_vanishing_once}, which concerns RC lattices for which $\mu(x,y) \neq 0$ holds whenever $(x,y) \neq (0,e)$ (recall that $0$ is the minimal element and $e$ is the maximal element). Such lattices do exist, for example the two lattices given in the introduction (Figure~\ref{fig:specialexample-a} and Figure~\ref{fig:rankedRClattice_mu0}).



%


The crucial tool for the proof will be the following analog of Lemma~\ref{lem:elimination} for this case.

\begin{lemma}\label{lem:adapted3.7}
	Let $F \subset L$ be a family different from $L$ and $L \backslash 0$.
	Let $z \in L$ be such that $z \not \in \Str(F)$.
	There exist coefficients $\lambda_y$ such that for all $p \in F$ we have
	\[\chi_z(p)=\sum_{y<z} \lambda_y \chi_y(p).\]
\end{lemma}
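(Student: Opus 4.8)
The plan is to imitate the proof of \Cref{lem:elimination} almost verbatim, observing that the \emph{only} step there which uses the nonvanishing of the M\"obius function is the final division by $\mu(x,z)$, where $x \leq z$ is a witness to $z \notin \Str(F)$, i.e.\ an element satisfying $p \meet z \neq x$ for all $p \in F$. Under the weaker hypothesis available here, $\mu(x,z) \neq 0$ is guaranteed precisely when $(x,z) \neq (0,e)$. Thus the entire task reduces to a single combinatorial question: \emph{can we always choose a witness $x \leq z$ with $(x,z) \neq (0,e)$?} If so, the computation of \Cref{lem:elimination} goes through unchanged and produces $\charf{z}(p) = -\sum_{x \leq y < z} \frac{\mu(x,y)}{\mu(x,z)} \charf{y}(p)$ for all $p \in F$, so setting $\lambda_y := -\mu(x,y)/\mu(x,z)$ on the range $x \leq y < z$ and $\lambda_y := 0$ elsewhere yields the claim, since every index appearing in the sum satisfies $y < z$.

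A witness exists at all precisely because $z \notin \Str(F)$. I would then split on whether $z = e$. If $z \neq e$, then any witness $x \leq z$ automatically has $(x,z) \neq (0,e)$, hence $\mu(x,z) \neq 0$, and we are done immediately.

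The only delicate case is $z = e$. Here $p \meet e = p$, so the witnesses are exactly the elements of $L \setminus F$, and we need one of them to be nonzero. This is exactly what the two excluded families buy us: the hypothesis $F \neq L$ makes $L \setminus F$ nonempty, while the hypothesis $F \neq L \setminus 0$ makes $L \setminus F \neq \{0\}$; together they force $L \setminus F$ to contain some $x \neq 0$, whence $(x,e) \neq (0,e)$ and $\mu(x,e) \neq 0$, and the argument concludes as before.

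The main (and essentially only) obstacle is conceptual rather than computational: recognizing that the M\"obius division can fail in one and only one configuration, namely $z = e$ together with $L \setminus F = \{0\}$, that is, $F = L \setminus 0$. Once this single bad case is isolated, the two hypotheses of the lemma are seen to be tailored precisely to exclude it; notably, relative complementation of $L$ plays no role in this lemma and is only invoked elsewhere in the proof of \Cref{thm:mu_vanishing_once}.
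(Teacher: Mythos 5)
Your proof is correct, and for the only nontrivial case ($z = e$ with $\mu(0,e) = 0$) it takes a genuinely different and more economical route than the paper. You exploit the fact that the witness $x$ in the proof of \Cref{lem:elimination} (an element $x \leq z$ with $p \meet z \neq x$ for all $p \in F$) is not canonical: when $z = e$ the witnesses are exactly the elements of $L \setminus F$, and the two excluded families are precisely what is needed to guarantee a witness $x \neq 0$, for which $\mu(x,e) \neq 0$ and the final division goes through, yielding the explicit coefficients $\lambda_y = -\mu(x,y)/\mu(x,e)$. The paper instead keeps the witness implicit and argues by linear algebra: it shows by downward induction that any dependency $\sum_a c_a v_a = 0$ among the column vectors $v_p(y) = \charf{y}(p)$, $y < e$, must satisfy $c_a = \mu(a,e)\,c_e$, so the dependency space of $\{v_p : p \in L\}$ is one-dimensional and spanned by the M\"obius vector; since $F$ omits some $a \neq 0$ with $\mu(a,e) \neq 0$, the columns $\{v_p : p \in F\}$ are independent, hence the rows $\charf{y}|_F$ with $y < e$ span all of $\FF[F]$, and in particular $\charf{e}|_F$. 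Your argument is shorter and constructive; the paper's yields the stronger byproduct that $\{\charf{y}|_F : y < e\}$ spans the whole of $\FF[F]$, though that extra strength is not needed for \Cref{thm:mu_vanishing_once}. Your closing remark that relative complementation plays no role in this lemma is also accurate --- the paper invokes it only afterwards, to handle the excluded family $F = L \setminus 0$.
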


\begin{proof}
	If $\mu$ is nonvanishing or $z \neq e$ then $\mu(x,z) \neq 0$ for all $x \le z$, and so the original proof of Lemma~\ref{lem:elimination} still applies.
	It therefore suffices to handle the case $z=e$ and $\mu(0,e)=0$.
	
	For every $p \in L$, define a function $v_p$ on $L\setminus e$ by $v_p(y) = \chi_y(p)$ (in other words, if we think of the functions $\chi_y$ for $y < e$ as the rows of a matrix, then the functions $v_p$ are its columns); so $v_p(y)$ indicates the condition ``$y \leq p$''. We will show that the space of linear dependencies of $\{ v_p : p \in L \}$ is one-dimensional. More explicitly, we show that if $\sum_{a \in L} c_a v_a = 0$ then $c_a = \mu(a,e) c_e$ for all $a \in L$.
	
	The proof is by backwards induction. The base case, $a = e$, is trivial. Now suppose that $c_b = \mu(b,e) c_e$ for all $b > a$. Then
\[
 0 = \sum_{b \in L} c_b v_b(a) = c_a + \sum_{b>a} \mu(b,e) c_e = c_a - \mu(a,e)c_e,
\]
    by definition of the M\"obius function.
    
    As $F \neq L,L\setminus 0$, there must be some element $a \neq 0$ missing from $F$. We claim that the functions $\{v_p : p \in F\}$ are linearly independent. Indeed, any linear dependency $\sum_{p \in F} c_p v_p = 0$ lifts to a linear dependency of $\{v_p : p \in L\}$, with $c_p = 0$ for $p \notin F$. In this linear dependency, $c_a = 0$. Since $a \neq 0$, we have $\mu(a,e) \neq 0$, and so $c_e = c_a/\mu(a,e) = 0$, which implies that the linear dependency is trivial.
    
    Since the functions $\{v_p : p \in F\}$ are linearly independent, the $(L \setminus 0) \times F$ matrix whose columns are $v_p$ has full rank $|F|$, and so its rows (which are just the functions $\chi_y|_F$ for $y < e$) span $\FF[F]$. In particular, some linear combination of the rows equals~$\chi_e|_F$.
\end{proof}

Theorem \ref{thm:mu_vanishing_once} follows by a striaghtforward adaptation of the proof of Theorem~\ref{thm:main-intro-2}.

\newtheorem*{thm:mu_vanishing_once}{Theorem \ref{thm:mu_vanishing_once}}
\begin{thm:mu_vanishing_once}
	Let $L$ be an RC lattice with minimal element $0$ and maximal element $e$. If $\mu(x,y) \neq 0$ whenever $(x,y) \neq (0,e)$ then every family $F \subseteq L$ shatters at least $|F|$ elements.
\end{thm:mu_vanishing_once}

\begin{proof}
	When $F$ is different from $L$ and $L \backslash 0$, the conclusion holds exactly as in the proof of \Cref{thm:main-intro-2} in Subsection~\ref{subsec:Proof_thm:main-intro-2}, by using Lemma~\ref{lem:adapted3.7} instead of Lemma~\ref{lem:elimination}.
	When $F=L$, we obviously have $\lvert F \rvert \le \lvert \Str (F) \rvert $  as $F=\Str(F)$.
	We argue that when $F=L \backslash 0$, then $\Str(F)=L \backslash e$.
	Indeed, take $y \in L \backslash e$.
	Any $s \le y$ different from $0$ belongs to $F$, and $y \meet s=s$.
	By Lemma~\ref{lem:atomic}, there is an atom $a \in F$ such that $a\not\leq y$. Thus $y \meet a = 0$, and so $y$ is shattered.	
\end{proof}

\subsection{Proof of Theorem~\ref{thm:productSSP}}

In this subsection we prove that the SSP property is closed under taking products.

\newtheorem*{thm:productSSP}{Theorem \ref{thm:productSSP}}
\begin{thm:productSSP}
		If two lattices $L$ and $K$ are SSP, then $L \times K$ is SSP.
\end{thm:productSSP}
\begin{proof}
	Let $F\subseteq K\times L$. We want to show that $|\Str(F)| \geq |F|$. For $k\in K$ and $\ell\in L$, define $F_k, I_k \subseteq L$ and $G_{\ell}, J_{\ell} \subseteq K$ as follows:
	
	\begin{align*}
	F_k &= \{\ell\in L \mid (k,\ell)\in F\}; \\
	I_k &= \Str_L(F_k); \\
	G_{\ell} &= \{k\in K \mid \ell \in I_k\}; \\
	J_{\ell} &= \Str_K(G_{\ell}).
	\end{align*}   	
	Construction of these sets is illustrated in Figure~\ref{fig-FIGJ} below.
	
	\begin{figure}[hbt]
		\centering
		\begin{tikzpicture} 
[point/.style={inner sep = 1.2pt, circle,draw,fill=white},
FIT/.style args = {#1}{rounded rectangle, draw,  fit=#1, rotate fit=45, yscale=0.5},
FITR/.style args = {#1}{rounded rectangle, draw,  fit=#1, rotate fit=-45, yscale=0.5},
FIT1/.style args = {#1}{rounded rectangle, draw,  fit=#1, rotate fit=45, scale=2},
vecArrow/.style={
		thick, decoration={markings,mark=at position
		   1 with {\arrow[thick]{open triangle 60}}},
		   double distance=1.4pt, shorten >= 5.5pt,
		   preaction = {decorate},
		   postaction = {draw,line width=0.4pt, white,shorten >= 4.5pt}
	}
]

\begin{scope} [scale=0.5, xshift = -6cm]      
        \begin{scope} [yshift = 0cm]			  
        	\node(1) at (-4, 4) [point] {};
        	\node(2) at (4, 4) [point] {};
        	\node(12) at (0, 8) [point] {};
        	\node(b) at (0, 0) [point] {};
        	\node [below left] at ($(b)!0.7!(1)$){$K$};	
        	\node [below right] at ($(b)!0.7!(2)$){$L$};	
        	
        	\draw (b)--(1)--(12)--(2)--(b);
        	\draw [dashed] (-3,1)--(3,7);
        	
        	\node (F) [fit=(b)(1)(2)(12)] {};
        	\node [below right] at (F.north west) {$F_{k_1}$};
        	\node [below right] at (F.east) {$F_{k_0}$};
            \node[FIT=(1)(12)] {};
            \node[FIT1=(2)] {};
        \end{scope}	

        \begin{scope} [yshift = -14cm]			  
			\node(1) at (-4, 4) [point] {};
			\node(2) at (4, 4) [point] {};
			\node(12) at (0, 8) [point] {};
			\node(b) at (0, 0) [point] {};
			\node [below left] at ($(b)!0.7!(1)$){$K$};	
			\node [below right] at ($(b)!0.7!(2)$){$L$};	
			
			\draw (b)--(1)--(12)--(2)--(b);
			\draw [dashed] (-3,1)--(3,7);
			
			\node (I) [fit=(b)(1)(2)(12)] {};
			\node [below right] at (I.north west) {$I_{k_1}$};
			\node [below right] at (I.south) {$I_{k_0}$};
			
			\node[FIT=(1)(12)] {};
			\node[FIT1=(b)] {};
			
		\end{scope}	

        \begin{scope} [xshift=14cm, yshift = 0cm]			  
			\node(1) at (-4, 4) [point] {};
			\node(2) at (4, 4) [point] {};
			\node(12) at (0, 8) [point] {};
			\node(b) at (0, 0) [point] {};
			\node [below left] at ($(b)!0.7!(1)$){$K$};	
			\node [below right] at ($(b)!0.7!(2)$){$L$};	
			
			\draw (b)--(1)--(12)--(2)--(b);
			\draw [dashed] (-3,7)--(3,1);
			
			\node (G) [fit=(b)(1)(2)(12)] {};
			\node [left] at (-3,1) {$G_{\ell_0}$};
			\node [] at (1.5,8) {$G_{\ell_1}$};
			\node[FITR=(1)(b)] {};
			\node[FIT1=(12)] {};
		\end{scope}	

        \begin{scope} [xshift=14cm, yshift = -14cm]			  
			\node(1) at (-4, 4) [point] {};
			\node(2) at (4, 4) [point] {};
			\node(12) at (0, 8) [point] {};
			\node(b) at (0, 0) [point] {};
			\node [below left] at ($(b)!0.7!(1)$){$K$};	
			\node [below right] at ($(b)!0.7!(2)$){$L$};	
			
			\draw (b)--(1)--(12)--(2)--(b);
			\draw [dashed] (-3,7)--(3,1);
			
			\node (J) [fit=(b)(1)(2)(12)] {};
			\node [left] at (-3,1) {$J_{\ell_0}$};
			\node [below right] at (J.east) {$J_{\ell_1}$};
			\node[FITR=(1)(b)] {};
			\node[FIT1=(2)] {};
		\end{scope}	

        \draw[vecArrow] (F.south)--(I.north);
        \draw[vecArrow] (I.north east)--(G.south west);
        \draw[vecArrow] (G.south)--(J.north);
\end{scope}	

\end{tikzpicture}	
		\caption{Construction of $F_k$, $I_k$, $G_{\ell}$ and $J_{\ell}$.}
		\label{fig-FIGJ}       
	\end{figure}

	Now, by SSP of $K$ and $L$, $|I_k|\geq |F_k|$ and $|J_{\ell}|\geq |G_{\ell}|$, for all $k\in K$, $\ell\in L$. Also, by definition, $\bigsqcup_{k\in K} \{k\} \times I_k = \bigsqcup_{\ell\in L} G_\ell \times \{\ell\}$. 
	Let us define $J = \bigsqcup_{\ell\in L} J_\ell \times\{\ell\}$. Then
	\[ |J| = \sum_{\ell\in L} |J_{\ell}| \geq \sum_{\ell \in L} |G_\ell| = \sum_{k\in K} |I_k| \geq \sum_{k\in K} |F_k| = |F|.\]
	To conclude the proof, we show that $F$ shatters all elements of $J$.
	
	Take any $(k,\ell) \in J$, and let $k' \leq k$ and $\ell' \leq \ell$. By construction, $k \in J_\ell$, and so $k$ is shattered by $G_\ell$. In particular, some $u \in G_\ell$ satisfies $k \meet_K u = k'$. Since $u \in G_\ell$, by construction $\ell \in I_u$, that is, $\ell$ is shattered by $F_u$. In particular, some $v \in F_u$ satisfies $\ell \meet_L v = \ell'$. In total, $(u,v) \in F$ satisfies $(k,\ell) \meet (u,v) = (k',\ell')$. 
%
\end{proof}


\subsection{Proof of Theorem~\ref{thm:n1SSP}}

We close by identifying a class of families which satisfies the SSP property in any RC lattice.

\newtheorem*{thm:n1SSP}{Theorem \ref{thm:n1SSP}}
\begin{thm:n1SSP}
	Let $L$ be an RC lattice and $F \subset L$ be a family for which $ L \backslash \Str(F)$ contains exactly one minimal element, then $\lvert F \rvert  \leq \lvert \Str(F)\rvert$. 
\end{thm:n1SSP}

\begin{proof}
	Denote the set of elements not shattered by $F$ by $N = L \backslash \Str(F)$. The set $N$ is closed upwards: if $F$ does not shatter $u$ then it also does not shatter any $v \geq u$. Therefore, if $x$ is the unique minimal element of $N$, then $N = [x) = \{u \mid x\leq u\}$.
	
	Since $x$ is not shattered by $F$, there exists some $y\leq x$ such that no $u \in F$ satisfies $u \meet x = y$. Denote $D = \{u \mid x\wedge u = y\}$, so $D$ and $F$ are disjoint. We will show that $|N| \leq |D|$. This implies the lemma since
	\[|F| = |L| - |L \backslash F| \leq |L| - |D| \leq |L| - |N| = |L| - |L \backslash \Str(F)| = |\Str(F)|.\]
	  
	Now, to prove $|N| \leq |D|$, let us take an arbitrary $a\in N = [x)$, that is, an arbitrary $a$ satisfying $x\leq a$. Then $y\leq x\leq a$, but, as $L$ is RC, the interval $[y,a]$ is complemented, and we can pick a complement $c(a)$ of $x$ in $[y,a]$. Note that this applies, in particular, when $y=x=a$, in which case $c(a) = a$.
	
	By definition, $c(a) \meet x = y$, that is, $c(a) \in D$. Also, $c(a) \join x = a$. This implies that the map $a \mapsto c(a)$ is one-to-one on $N$. Indeed, if $c(a) = c(b)$ then 
	$a = x\join c(a) = x\join c(b) = b$. The existence of a one-to-one mapping from $N$ to $D$ proves that $|N|\leq |D|$, finishing the argument. 
\end{proof}

\section{Conclusion and open problems} \label{sec:thoughts}

VC dimension and shattering were extensively studied in the classical case. \Cref{thm:main-intro-2} and, if true, \Cref{conj:SSP=RC}, enable us to ask related questions in an extended setting of lattices with nonvanishing M\"obius function, or of RC lattices. We end the paper by outlining a list of possible questions, which we consider to be interesting.

\emph{Families with small VC~dimension and inclusion-maximality.} An explicit description of all sets of VC~dimension~$1$ exists in the classical case: they correspond to forests~\cite{MR13,BenDavid15}. Having an exhaustive description of these families in the extended setup would be desirable. 
A subset of an SSP-lattice of VC~dimension~$k$ is called \emph{inclusion-maximal} if adding an element to it increases its VC~dimension. Naturally, study of families with small VC~dimension boils down to study of inclusion-maximal families of small VC dimension.

\begin{question}
	Give a description of inclusion-maximal families of VC~dimension~$1$ for SSP lattices. 
\end{question}

It is known that, in a classical setup, every set of VC~dimension~$1$ can be extended to a set of size $n+1$ without increasing its VC~dimension. The same does not hold in the case of vector spaces, as the following example shows:

\begin{lemma} \label{lem:critical-1}
	Consider a subspace lattice $\lattice = \FF_q^n$, where $n \geq 2$. Let $U$ be a subspace of $\FF_q^n$ of dimension~$n-1$. The set $F = \{ 0, \FF_q^n \} \cup \{ \Span{x} : x \notin U \}$ is an inclusion-maximal set of VC~dimension~$1$, and it contains $q^{n-1} + 2$ subspaces; in comparison, $\sqbinom{\lattice}{\leq 1} = 1 + \frac{q^n-1}{q-1}$, which is larger by a factor of roughly $\frac{q}{q-1}$.
\end{lemma}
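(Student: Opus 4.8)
The plan is to verify three separate claims about the set $F = \{0, \FF_q^n\} \cup \{\Span{x} : x \notin U\}$: that it has VC dimension exactly $1$, that it is inclusion-maximal at VC dimension $1$, and that its cardinality is $q^{n-1}+2$. I would organize the argument around these three claims, since the cardinality computation is routine and the two structural claims are where the content lies.

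First I would establish the VC dimension. To see $\VC(F) \geq 1$, I would exhibit a shattered atom: pick any line $\Span{x}$ with $x \notin U$; the elements $0 \in F$ and $\Span{x} \in F$ meet $\Span{x}$ in $0$ and $\Span{x}$ respectively, so $\Span{x}$ is shattered. For the upper bound $\VC(F) \leq 1$, I would argue that $F$ shatters no element of rank $\geq 2$. Suppose $W$ is a subspace with $\dim W = 2$ that is shattered. Then for every one-dimensional $t \leq W$ there must be a member $z \in F$ with $z \meet W = t$; and there must also be a member meeting $W$ in $W$ itself, i.e. some $z \geq W$. The only members of $F$ of rank $\geq 2$ are $\FF_q^n$ and possibly none others — every other member is $0$ or a line $\Span{x}$. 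A line $z = \Span{x}$ meets $W$ either in $0$ (if $x \notin W$) or in $\Span{x}$ (if $x \in W$); so the lines can realize the value $\Span{x}$ only for lines inside $W$, and those lines must come from $\{\Span{x} : x \notin U\}$. The key point is that $W$ contains at most the lines it contains, but to be shattered $W$ needs all $q+1$ of its lines realized as meets, and only $\FF_q^n$ realizes the meet $W$; counting shows the required configuration of distinct meet-values cannot all be produced, giving a contradiction. I expect this counting step — carefully checking that a $2$-dimensional $W$ cannot be shattered by a family consisting of $0$, a top element, and lines off a hyperplane — to be the main obstacle, and the cleanest route is probably to observe that $W \cap U$ is a line (since $U$ is a hyperplane and $W$ is a plane not inside $U$), so exactly one line of $W$ lies in $U$ and hence is not available from $F$, obstructing shattering.

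Next I would prove inclusion-maximality. Let $V$ be any subspace not already in $F$; I must show $F \cup \{V\}$ has VC dimension $\geq 2$, i.e. it shatters some plane. The subspaces missing from $F$ are precisely the lines inside $U$ and all subspaces of dimension between $2$ and $n-1$. In each case I would produce a plane $W$ that becomes shattered once $V$ is added: using $V$ to realize the one meet-value that $F$ alone was missing on $W$. Since every plane $W$ fails to be shattered by $F$ only because of the single ``interior'' line $W \cap U$ (or because the value $W$ itself is unrealized when $V$ has dimension $\geq 2$), adding an appropriate $V$ supplies exactly the missing meet. I would choose $W$ depending on $V$: if $V$ is a line in $U$, take a plane $W$ through $V$ and one exterior line; if $V$ has higher dimension, take a plane $W \leq V$ (or a plane meeting $V$ suitably) so that $V$ realizes the top meet-value $W$ while the exterior lines realize the others.

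Finally, the cardinality: $F$ contributes $2$ for $\{0, \FF_q^n\}$ plus the number of lines $\Span{x}$ with $x \notin U$. The lines through the origin correspond to points of projective space; those not contained in $U$ are exactly the lines whose generator lies outside the hyperplane $U$. Counting one-dimensional subspaces not lying in $U$ gives $\frac{q^n - q^{n-1}}{q-1} = q^{n-1}$, so $|F| = q^{n-1} + 2$. Comparing with $\sqbinom{\lattice}{\leq 1} = 1 + \frac{q^n - 1}{q-1}$ from \Cref{lem:subspace-size} and simplifying the ratio yields the stated factor of roughly $\frac{q}{q-1}$, completing the proof.
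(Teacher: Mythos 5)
The paper states \Cref{lem:critical-1} without proof, so there is nothing to compare against; your proposal has to stand on its own. The cardinality count and the bound $\VC(F)\le 1$ are correct and your key observation --- that for a plane $W\not\subseteq U$ the intersection $W\cap U$ is a line of $W$ that no member of $F$ can realize as a meet --- is exactly the right reason (you should also dispose of the trivial case $W\subseteq U$, where no line of $W$ is realizable, and invoke \Cref{lem:shattering-hereditary} to reduce from higher rank to planes).

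The genuine gap is in the inclusion-maximality step for a missing subspace $V$ with $2\le\dim V\le n-1$. Your primary suggestion --- ``take a plane $W\le V$ so that $V$ realizes the top meet-value $W$'' --- fails: the top value $W$ was never the obstruction, since $\FF_q^n\in F$ already gives $\FF_q^n\meet W=W$; the obstruction is the line $W\cap U$, and if $W\le V$ then $V\meet W=W$, so that line is still unrealized and $W$ is still not shattered. (Your parenthetical claim that ``the value $W$ itself is unrealized when $V$ has dimension $\ge 2$'' is false for the same reason.) What you actually need is a plane $W$ with $V\meet W=W\cap U$. This can be arranged: since $\dim V\ge 2$, the intersection $V\cap U$ contains a line $\ell$; pick $x\notin U\cup V$ (possible because a vector space is never the union of two proper subspaces) and set $W=\ell+\Span{x}$. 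Then $W\cap U=\ell$ and $V\cap W=\ell$, so $V$ supplies exactly the missing meet-value, while the remaining $q$ lines of $W$ all lie outside $U$ and hence belong to $F$; together with $0$ and $\FF_q^n$ this shatters $W$. Your treatment of the case where $V$ is a line inside $U$ (take $W=V+\Span{x}$ with $x\notin U$) is correct as stated.
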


\begin{question}
	Is there a ``nice'' characterization of lattices for which all inclusion-maximal sets of VC~dimension~$1$ have at least $n+1$ elements? Here $n$ is the number of join-irreducible elements, which in the classical setup corresponds to the size of the base set. 
\end{question}

\emph{Shattering-extremality.} 
A subset of an SSP-lattice  is called \emph{shattering-extremal} if it shatters as many elements as it has, that is, if it obtains equality in SSP inequality. A number of results exists for shattering-extremal families in the classical setup, in particular, for small VC~dimension~\cite{Moran12,MR13,MR14}. 

\begin{question}
	What can we say about shattering-extremal sets of small VC~dimension for SSP lattices?
\end{question}	

In the classical setting there is a notion of \emph{strong shattering}, which is in some sense dual to shattering. The analog of the SSP lemma for strong shattering states that a family strongly shatters at most as many elements as it has. Thus the size of the family is sandwiched between the number of elements it strongly shatters and the number of elements it shatters~\cite{BR95}. It turns out that equality holds in the SSP lemma iff it holds in the strong SSP lemma.

\begin{question}
	Can strong shattering be reasonably defined in the lattice setting? Will shattering-extremality be equivalent to strong shattering-extremality, as in the classical setting?
\end{question}	

Shattering-extremal families which are closed under intersection are precisely \emph{convex geometries}~\cite{Chorn18}.

\begin{question}
	Characterize the class of shattering-extremal meet-subsemilattices of SSP lattices.
	Matroids and their duals, being SSP, are in this class, as are duals of antimatroids, which are convex geometries.
	
	Matroids and antimatroids are known examples of \emph{greedoids}~\cite{Greedoids}. Are shattering-extremal families precisely duals of greedoids, or is there some other connection?
\end{question}	

It is well-known that for antimatroids there is a characterization in terms of \emph{forbidden projections}, also called \emph{circuits}~\cite{Dietrich87}. This characterization can be recast, in a straightforward manner, for convex geometries. A similar characterization also exists for shattering-extremal families in general~\cite{Forbidden}. 

\begin{question}
	Is there a ``forbidden projections'' characterization of shattering-extremal families of SSP lattices?
\end{question}

\bibliographystyle{plain}
\bibliography{biblio}

\end{document}